\pgfplotsset{compat=1.8}
\definecolor{codegreen}{rgb}{0,0.6,0}
\definecolor{codegray}{rgb}{0.5,0.5,0.5}
\definecolor{codepurple}{rgb}{0.58,0,0.82}
\definecolor{yqqqqq}{rgb}{0.5019607843137255,0.,0.}
\definecolor{xdxdff}{rgb}{0.49019607843137253,0.49019607843137253,1.}
\pgfplotsset{compat=newest}
\newcommand{\R}{\mathbb{R}}
\newcommand{\vp}{\varphi}
\newcommand{\pa}{\partial}
\DeclareMathOperator{\sgn}{sgn}
\newtheorem{theorem}{Theorem}
\newtheorem{lemma}[theorem]{Lemma}
\theoremstyle{definition}
\theoremstyle{remark}
\newtheorem{remark}[theorem]{Remark}
\def\supp{{\rm supp}}
\def\({\begin{eqnarray}}
\def\){\end{eqnarray}}
\def\[{\begin{eqnarray*}}
\def\]{\end{eqnarray*}}
\def\dt{\partial_t}
\def\ds{\partial_s}
\def\dx{\partial_x}
\author{M. Fischer, L. Kanzler, C. Schmeiser}
\address{M. Fischer: Universit\"at Wien, Fakult\"at f\"ur Mathematik,  Oskar-Morgenstern-Platz 1, 1090 Wien, Austria, \& Universität Graz - Institut für Mathematik und Wissenschaftliches Rechnen, Harrachgasse 21, 8010 Wien, Austria, {\tt michi.fischer@univie.ac.at}}
\address{L. Kanzler: CEREMADE, Universit\'e Paris Dauphine,
Place du Mar\'echal de Lattre de Tassigny, F-75775 Paris Cedex 16, France, {\tt laura.kanzler@dauphine.psl.eu}}
\address{C. Schmeiser: Universit\"at Wien, Fakult\"at f\"ur Mathematik,  Oskar-Morgenstern-Platz 1, 1090 Wien, Austria, {\tt christian.schmeiser@univie.ac.at}}
\title[Repulsive Particles]{One-dimensional short-range nearest-neighbor interaction and its nonlinear diffusion limit}
\date{}
\begin{document}
\maketitle


\section*{Abstract}
\noindent Repulsion between individuals within a finite radius is encountered in numerous applications, including cell exclusion, i.e. avoidance of overlapping cells, bird flocks, or microscopic pedestrian models. We define such individual based particle dynamics in one spatial dimension with minimal assumptions of the repulsion force $f$ as well as their external velocity $v$ and prove their characteristic properties. Moreover, we are able to perform a rigorous limit from the microscopic to the macroscopic scale, where we could recover the finite interaction radius as a density threshold. Specific choices for the repulsion force $f$ lead to well known nonlinear diffusion equations on the macroscopic scale, as e.g. the porous medium equation. At both scaling levels numerical simulations are presented and compared to underline the analytical results. 

\medskip
\noindent \textbf{Keywords:} agent-based models, repulsive force, cell-exclusion, nonlinear diffusion limit, porous medium equation

\smallskip
\noindent \textbf{Mathematics subject classification:} 82C22, 35R37, 35K55, 92C15

\section{Introduction}

We consider a chain of particles keeping their order along a straight line and interacting with their neighbors by distance dependent
repulsive forces, which vanish above an equilibrium distance. Their movement is further influenced by external forces in the form of a position dependent velocity. For a finite number of such particles the distance between the first and the last particle will remain finite for all time. Our goal is to derive a macroscopic continuum model sharing this property, i.e. for an initial particle density with bounded support there should be a finite upper bound for the length of the support at later times. Such a model, in the form of a nonlinear diffusion equation with drift, will be derived from the particle model by a continuum limit. The macroscopic model is diffusive since we choose a friction dominated (overdamped) microscopic model, being motivated by the  dynamics of bacterial colonies living in viscous environments.

Repulsive effects with a finite radius are often used in microscopic particle systems and their corresponding kinetic and macroscopic models. In flocking models, such as the Cucker-Smale- and the Vicsek-model, see~\cite{4200853, PhysRevLett.75.1226}, they appear as part of an interaction between attraction and repulsion. Examples occur in the modelling of collective behaviour within sheep-herds~\cite{a:sheep}, fish schools~\cite{Carrillo2010} and bird flocks~\cite{doi:10.1137/090757290}. In our setting only neighbouring particles interact if their distance does not exceed a given threshold, which causes the model to combine dynamics of metric interactions, i.e. interactions depending on the distance defined on the state-space, and topological interactions, i.e. interactions depending on the relative separation, which is given by the number of intermediate individuals. For various interacting agent systems, e.g. bird flock dynamics, topological interactions serve as a very realistic model approach, hence its recent interest~\cite{blanchet2017kinetic}, or see also~\cite{Haskovec2013FlockingDA}  in a kinetic context. 

Repulsion forces between individuals are furhter highly relevant in modelling size exclusion effects. On the microscopic level, this has been studied for pattern formation in bacterial colonies~\cite{Doumic2020APM, duvernoy2018asymmetric, you2018geometry}. In general, such modelling of size exclusion is a macroscopic alternative to models based on e.g. cellular automata, compare~\cite{schlake2011mathematical}, or microscopic asymmetric exclusion process, see~\cite{burger2016flow}, in which size exclusion has no influence on the diffusive term, typically a linear diffusive-term is derived. The modelled repulsion can also be seen as a  cut-off potential. On a microscopic level this is often used for better computational speed and e.g. steadier movement of pedestrians as in the optimal-step-model, see~\cite{seitz2012natural}.  A further investigation of size-exclusion effects between interacting particles, even grouped in species, can be found in~\cite{BJ1, BJ2}. Performing the many particle limit under usage of the matched asymptotic expansion technique leads to a system of nonlinear cross-diffusion equations for each species. Remarkable is the effect the size-exclusion has on the diffusivities on the macroscopic scale: while excluded-volume interactions between particles of the same species favour enhancement of the diffusion rates, its diminishment of is caused by particles of the other species.

This nonlinear diffusion term can be microscopically interpreted as particles trying to reach a desired density. Density thresholds similar to velocity thresholds are an important topic in pedestrian dynamics, see~\cite{adrian2018crowding, doi:10.1137/18M1215980}, gaining new focus in the context of social distancing~\cite{MayrKoester2021}, and cannot be calibrated in other macroscopic models, see~\cite{doi:10.1137/18M1215980}. Moreover, nonlinear diffusion equations are widely studied in the form of the well-known porous medium equation. Similar to this work, in~\cite{Oe} various versions of the porous medium equation have been derived in the many particle limit, starting from systems of ordinary differential equations modelling their interaction. While in~\cite{Oe} the macroscopic limit was achieved in a mean-field sense with varying interaction intensities, our approach relies on the definition of the discrete density, measuring the inverse of the difference of two neighboring particle positions. For an overview over the theory of porous medium equations we refer the reader to~\cite{Vazquez:2006aa, Vazquez}. While usual porous medium models consider diffusion without threshold, our approach can be seen as a generalisation of the nonlinear diffusion of a gas expanding in porous media, where the repulsion of its particles is limited by its force only having finite range (see Section \ref{s:macro}). 

Related limits from microscopic particle dynamics on the real line to macroscopic equations can be found in ~\cite{di2019deterministic, FR}. In~\cite{FR} the authors derive a nonlinear diffusive equation coupled with an aggregation term from the corresponding dynamics of the particle positions. On the one hand, it is very similar to our work in methods and assumptions on the nonlinearity. On the other hand, our assumptions of a purely external forcing coupled with the nonlinear interactions show a different aspect of this problem, together with a stronger convergence result.

On a macroscopic level nonlinear diffusion equations with degenerate diffusivity lead to moving boundary problems. It is closely related to the Stefan-problem, see classical books~\cite{meirmanov2011stefan, rubinshteuin1971stefan}.  Analytically exact solutions are still matter of current research, see~\cite{ceretani2018exact, font2018one} and also numerically challenges occur e.g. also in cancer research, see~\cite{a:cancer1, ihsan2020godunov}. On an agent-based level, cancer growth was investigated in~\cite{motsch2018short}. More recently, in ~\cite{LMP}, an individual-based mechanical model describing the dynamics of two contiguous cell populations with different characteristic behaviours was formulated and its formal continuum model was presented. The latter leading to the aforementioned free boundary problem with a nonlinear diffusive term, which exhibits travelling wave solutions.

This work is structured as follows. In Section \ref{s:micro} the microscopic model in the form of an ODE system
is formulated and its characteristic properties are derived. Section \ref{s:macro} contains a formal derivation of the macroscopic 
model and a discussion of its qualitative properties. This includes the derivation of a Eulerian formulation of the model,
which is originally written in terms of Lagrangian coordinates. In Section \ref{sec:lim} the macroscopic limit is carried out rigorously, providing also a existence of solutions for the continuum model. Some of the formal results are illustrated by numerical simulations in
Section \ref{sec:num}. Finally, we conclude this article with Section \ref{sec:CaO}.

\section{The microscopic model -- individual based dynamics}\label{s:micro}

Consider a chain of $N+1$ point particles with time dependent positions $x_i(t)\in\R$, $0\le i\le N$, such that
$$
    x_0(t)\le x_1(t) \le \cdots \le x_N(t) \,.
$$
Neighboring particles $i$ and $i+1$ interact by a distance dependent repulsive force \newline $F(x_{i+1}-x_i)$, written as $F(r) = F_0 f(r/R)$
in terms of the dimensionless function $f$, which satisfies
\(\label{f-ass}
   f:\, [0,\infty)\to[0,1] \text{ is Lipschitz and nonincreasing,} \quad \supp (f) = [0,1] \,,
\)
i.e. there is no interaction between neighbors further apart than the equilibrium distance $R>0$. Additionally, the particles are under the action of a position dependent external force, written as $F_{ext}(x) = F_0v(\frac{x}{NR})$, where $v$ fulfills
\begin{align}\label{v-ass}
 	v \in W^{2,\infty}(\R) \,.
\end{align}
Balancing these forces with friction against a nonmoving environment (with friction coefficient $\mu>0$) leads to the ODE system 
\begin{align}\label{eq:IBM-unscaled}
\begin{split}
\mu \dot{x}_0 &=  - F\left(x_1 - x_0\right) - F_{ext}(x_0)\,,\\
\mu\dot{x}_i &=  F\left(x_i-x_{i-1}\right) -  F\left(x_{i+1}-x_i\right) - F_{ext}(x_i) \,, \qquad 1\le i  \le N-1 \,, \\
\mu \dot{x}_N &=  F\left(x_N - x_{N-1}\right) - F_{ext}(x_N) \,. 
\end{split}
\end{align}
We introduce a nondimensionalization by
\[
   x \to NRx \,,\qquad t\to N^2 \frac{\mu R}{F_0} t \,.
\]
This is a diffusive macroscopic rescaling (by the factors $N$ and, respectively, $N^2$) of the natural microscopic scaling. The scaled
system reads
\begin{align}\label{eq:IBM}
\begin{split}
	 \dot{x}_0 &=  - Nf\left(N(x_1 - x_0)\right) + v(x_0) \,,\\
	\dot{x}_i &=  N\left( f\left(N(x_i-x_{i-1})\right) -  f\left(N(x_{i+1}-x_i)\right)\right) + v(x_i) \,, \qquad 1\le i  \le N-1 \,, \\
	 \dot{x}_N &=  N f\left(N(x_N - x_{N-1})\right) + v(x_N)\,.
\end{split}
\end{align}
We shall mostly work with a reformulation in terms of the new unknowns
\begin{align}\label{eq:defom}
\omega_i:= N(x_i-x_{i-1}), \quad 1\le i \le N \,,
\end{align}
satisfying
\begin{align}\label{eq:IBM_om}
\begin{split}
\dot{\omega}_1 &= N^2 \left[ 2f\left(\omega_1\right) - f\left(\omega_2\right)\right] +N\left(v(x_1)-v(x_0)\right)\,, \\
\dot{\omega}_i &= N^2 \left[2f\left(\omega_i\right) - f\left(\omega_{i-1}\right) -f \left(\omega_{i+1}\right)\right] +N\left(v(x_{i})-v(x_{i-1})\right) \,, \quad 2 \le  i \le N-1 \,, \\
\dot{\omega}_N & = N^2 \left[ 2f\left(\omega_N\right) - f\left(\omega_{N-1}\right)\right] +N\left(v(x_N)-v(x_{N-1})\right) \,,
\end{split}
\end{align}
which is coupled to
\begin{align}\label{eq:IBMom_pos}
	\dot{x}_0 = - N f(\omega_1) + v(x_0), \quad \quad x_i = x_0 + \frac{1}{N} \sum_{j=1}^i \omega_j\,.
\end{align}
This system will be considered subject to initial conditions
\(\label{IC-om}
  \omega_i(0) = \omega_{i,0} \ge 0 \,,\qquad 1\le i \le N \,, \quad x_0(0) = x_{0,0}\,.
\)
We start with stating global existence and boundedness of the solution.
 
\begin{theorem}\label{theo:exuniq}
Let $f$ and $v$ satisfy \eqref{f-ass} and \eqref{v-ass} respectively and let $0 \leq \omega_{i,0} < \infty$ for all $i \in 1, \dots, N$. Then there exists a unique global solution $(\omega_1,\ldots,\omega_N) \in C^{1,1}\left([0,\infty)\right)^N$ 
of \eqref{eq:IBM_om}, \eqref{IC-om}, which satisfies
\begin{align*}
\omega_{\min} e^{-\gamma t} \le \omega_i(t) \le \omega_{\max} e^{\gamma t} \,,\qquad t\ge 0 \,,\, 1\le i\le N \,,
\end{align*}
where we defined
 \begin{align*}
\omega_{\min}:=\min_{1\le j\le N} \omega_{j,0} \,, \quad  \omega_{\max} := \max \left\{1,\,\max_{1\le j\le N} \omega_{j,0} \right\} \,, \quad \gamma:=\|v'\|_{\infty}\,.
\end{align*}
\end{theorem}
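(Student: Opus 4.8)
The plan follows the classical pattern for such ODE systems: local well-posedness by Picard--Lindel\"of, then the two-sided bound as an \emph{a priori} estimate, and global existence as a consequence.

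First I would extend $f$ to all of $\R$ by $f(r):=f(0)$ for $r<0$, which keeps it bounded, Lipschitz and nonincreasing. Taking $(\omega_1,\dots,\omega_N,x_0)\in\R^{N+1}$ as the unknowns, with $x_i=x_0+\frac1N\sum_{j=1}^i\omega_j$ depending affinely on them, the right-hand side of \eqref{eq:IBM_om}--\eqref{eq:IBMom_pos} is globally Lipschitz: $f$ is Lipschitz, $v$ and $v'$ are bounded and Lipschitz by \eqref{v-ass}, and the $x_i$ are affine, so the right-hand side has at most linear growth. Picard--Lindel\"of then yields a unique solution defined for all $t\ge0$; and since the curve $t\mapsto(\omega(t),x_0(t))$ is $C^1$, hence locally Lipschitz, each $\dot\omega_i$ — being a composition of Lipschitz maps with this curve — is locally Lipschitz in $t$, so $\omega_i\in C^{1,1}_{\mathrm{loc}}([0,\infty))$.

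The substance of the proof is the two-sided bound, which I would obtain by tracking the extremal components $m(t):=\min_i\omega_i(t)$ and $M(t):=\max_i\omega_i(t)$; both are locally Lipschitz, and at a.e.\ $t$ their derivatives equal $\dot\omega_i$ for some index $i$ attaining the minimum, resp.\ maximum. Two structural facts drive the estimate. First, by the mean value theorem $N\bigl(v(x_i)-v(x_{i-1})\bigr)=v'(\xi_i)\,\omega_i$ for some $\xi_i$ between $x_{i-1}$ and $x_i$, so this external term is bounded in modulus by $\gamma|\omega_i|$ with $\gamma=\|v'\|_\infty$. Second, the discrete-Laplacian bracket $N^2[2f(\omega_i)-f(\omega_{i-1})-f(\omega_{i+1})]$ (with the stated one-sided modification at $i=1,N$) is $\ge0$ whenever $\omega_i=m(t)$, simply because $f$ is nonincreasing; and it is $\le0$ whenever $\omega_i=M(t)$ \emph{and} $M(t)\ge1$ — for an interior index this is again monotonicity, while for $i\in\{1,N\}$ it uses $\supp f=[0,1]$, so $f(\omega_i)=0$ once $\omega_i\ge1$. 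Combining, one gets $\dot m(t)\ge-\gamma|m(t)|$ a.e., and $\dot M(t)\le\gamma M(t)$ a.e.\ on the open set $\{M>1\}$. If the bound $m\ge0$ (resp.\ $M(t)e^{-\gamma t}\le\omega_{\max}$) were first violated at some $t_1$, one takes the last $t_0<t_1$ with $m(t_0)=0$ (resp.\ $M(t_0)e^{-\gamma t_0}=\omega_{\max}$); on $(t_0,t_1]$ the quantity stays on the bad side, so the relevant differential inequality holds throughout and integrating (Grönwall) contradicts the violation at $t_1$. This gives successively $m(t)\ge0$, $m(t)\ge\omega_{\min}e^{-\gamma t}$, and $M(t)\le\omega_{\max}e^{\gamma t}$ — the normalization $\omega_{\max}\ge1$ being exactly what neutralizes the region $M<1$, where the bracket has no sign. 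Since in particular $\omega_i(t)\ge0$, the extension of $f$ was irrelevant and we have produced a solution of the original system \eqref{eq:IBM_om}; uniqueness is already contained in Picard--Lindel\"of.

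The step I expect to be the genuine obstacle is the second structural fact \emph{for the upper bound at the two boundary particles}, where the discrete Laplacian carries no sign on its own and one must invoke the finite interaction range together with the normalization $\omega_{\max}\ge1$. A minor technical nuisance is that $m$ and $M$ are merely Lipschitz, so the Grönwall steps should be phrased via Dini derivatives (or Rademacher's theorem plus absolute continuity), which is routine. (Had one not wished to argue linear growth, global existence would equally follow from these a priori bounds via the blow-up alternative, together with the elementary estimate $|x_0(t)|\le|x_{0,0}|+(N+\|v\|_\infty)t$ coming from $0\le f\le1$.)
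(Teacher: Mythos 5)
Your proposal is correct and follows essentially the same route as the paper: a maximum-principle argument at the extremal indices, where monotonicity of $f$ gives the sign of the discrete Laplacian, the support condition $\supp f=[0,1]$ together with the normalization $\omega_{\max}\ge 1$ handles the boundary indices $i=1,N$ in the upper bound, and the factors $e^{\pm\gamma t}$ absorb the drift term $N(v(x_i)-v(x_{i-1}))=v'(\xi_i)\omega_i$. If anything, your write-up is more careful than the paper's on two routine points: the extension of $f$ to negative arguments (needed before positivity of the $\omega_i$ is known) and the replacement of the bare ``derivative $\le 0$ at the touching time'' step by a first-violation-time plus Gr\"onwall argument.
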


\begin{proof}
Global existence and uniqueness follow from the Lipschitz continuity of $f$ and $v$. In order to show the upper bound of $\omega_i(t)$, we define $Y_i(t):=e^{-\gamma t}\omega_i(t)$ for $2 \leq i \leq N-1$ and estimate
\begin{align*}
	\dot{Y}_i(t) =& -\gamma Y_i(t) + e^{-\gamma t} N^2\left[ 2f\left(e^{\gamma t}Y_i(t)\right) - f\left(e^{\gamma t}Y_{i-1}(t)\right) -f \left(e^{\gamma t}Y_{i+1}(t)\right) \right] \\
	+& e^{-\gamma t}N \left(v(x_i)-v(x_{i-1})\right) \\
	\leq& e^{-\gamma t} N^2\left[ 2f\left(e^{\gamma t}Y_i(t)\right) - f\left(e^{\gamma t}Y_{i-1}(t)\right) -f \left(e^{\gamma t}Y_{i+1}(t)\right) \right] \,.
\end{align*}
Assume that for a time $t_0$ we have $Y_i(t_0)=\omega_{max}$. We remember $f \geq 0$ and $\supp(f) = [0,1]$, stated in \eqref{f-ass}. Hence, we have $f(\omega)=0$ for $\omega \geq \omega_{max}$ and we can conclude $\dot{Y}_i(t_0) \leq 0$, which gives the upper bound $\omega_i(t) \leq e^{\gamma t}\omega_{max}$. With analogous arguments we this upper bound for the boundary values $\omega_1$ and $\omega_N$ can be derived. 

In order to show the bound from below we define in a similar manner $y_i(t) := e^{\gamma t} \omega_i(t)$, from which we obtain the estimate
\begin{align*}
	\dot{y}_i(t) \geq N^2\left[ 2f\left(e^{-\gamma t}y_i(t)\right) - f\left(e^{-\gamma t}y_{i-1}(t)\right) -f \left(e^{-\gamma t}y_{i+1}(t)\right) \right]\,, \quad 2 \leq i \leq N-1\,.
\end{align*}
We assume that for a time $t_0$ we have $y_i(t_0)=\omega_{min}$ and $y_j(t_0)\geq \omega_{min}$ for $1 \leq j \leq N$. Due to the monotonicity of $f$ we see that $\dot{y}_i(t) \geq 0$ has to hold, which gives the lower bound $\omega_i(t) \geq \omega_{\min}e^{-\gamma t}$. In an analogous way the lower bound for the boundary points $\omega_1$ and $\omega_N$ can be shown, which completes the proof.
\end{proof}

\begin{theorem}\label{theo:char}
Let $(x_0,\ldots,x_N)$ be a solution of \eqref{eq:IBM}, satisfying \eqref{eq:defom}, \eqref{IC-om}. Then
\begin{enumerate}[label=\arabic*)]
\item \label{theo:char1} the center of mass
\begin{align*}
\bar{x} := \frac{1}{N+1}\sum_{i=0}^N x_i 
 \end{align*} 
 moves with respect to the average of all particle velocities:
 \begin{align}\label{eq:barx}
 	\frac{d}{dt}\bar{x}(t) = \frac{1}{N+1} \sum_{i=0}^N v(x_i(t)) \,,
 \end{align}
 \item the change in time of the variance
 \begin{align*}
 \mathcal{V}_x(t) := \frac{1}{N} \sum_{i=0}^N \left(x_i-\bar{x}\right)^2
 \end{align*}
 is given by
 \begin{align*}
 	\frac{d}{dt} \mathcal{V}_x(t) = \frac{2}{N} \left[\sum_{i=1}^N f(\omega_i) \omega_i  - \sum_{0 \leq i \leq N} (x_i-\bar{x})\sum_{1 \leq j \leq N, i \neq j} v(x_j)\right] \,,
 \end{align*}
\item the distance between the leftmost and the rightmost particle remains bounded in bounded time-intervals:
\[
   x_N(t) - x_0(t) \le e^{\gamma t} \omega_{\max} \,,\qquad t\ge 0\,.
\]
\end{enumerate}
\end{theorem}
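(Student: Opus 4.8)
The plan is to handle the three assertions in turn, in each case exploiting the telescoping (nearest-neighbour) structure of the forces in \eqref{eq:IBM}. It is convenient to keep the notation $\omega_i = N(x_i - x_{i-1})$ and to set $f(\omega_0) := 0 =: f(\omega_{N+1})$, so that every equation in \eqref{eq:IBM} takes the uniform form $\dot x_i = N\bigl(f(\omega_i) - f(\omega_{i+1})\bigr) + v(x_i)$ for $0\le i\le N$, including the two boundary equations. For part \ref{theo:char1} I would then differentiate $\bar x$ and sum \eqref{eq:IBM} over $i$: the force contribution $\sum_{i=0}^N N\bigl(f(\omega_i)-f(\omega_{i+1})\bigr)$ telescopes to $0$, leaving $\frac{d}{dt}\bar x = \frac{1}{N+1}\sum_{i=0}^N v(x_i)$.

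For part 2, differentiating the variance gives $\frac{d}{dt}\mathcal V_x = \frac{2}{N}\sum_{i=0}^N (x_i-\bar x)(\dot x_i - \dot{\bar x})$; since $\sum_{i=0}^N (x_i - \bar x) = 0$, the $\dot{\bar x}$-term drops out and one is left with $\frac{2}{N}\sum_{i=0}^N (x_i - \bar x)\dot x_i$. Substituting the uniform form of \eqref{eq:IBM} splits the right-hand side into a force part and a velocity part. For the force part I would apply summation by parts (Abel's identity): $\sum_{i=0}^N (x_i - \bar x)\,N\bigl(f(\omega_i) - f(\omega_{i+1})\bigr) = \sum_{i=1}^N N(x_i - x_{i-1})f(\omega_i) = \sum_{i=1}^N \omega_i f(\omega_i)$, using $x_i - x_{i-1} = \omega_i/N$ and the vanishing of the boundary values $f(\omega_0), f(\omega_{N+1})$. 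The velocity part $\sum_{i=0}^N(x_i-\bar x)v(x_i)$ is then recast into the displayed double-sum form, which is a purely algebraic consequence of $\sum_{i=0}^N(x_i-\bar x)=0$. Collecting the two contributions yields the claimed identity for $\frac{d}{dt}\mathcal V_x$.

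Part 3 is immediate from Theorem \ref{theo:exuniq}: by \eqref{eq:IBMom_pos} one has $x_N(t) - x_0(t) = \frac{1}{N}\sum_{i=1}^N \omega_i(t)$, and each $\omega_i(t) \le \omega_{\max} e^{\gamma t}$, so the average of the $\omega_i$ is bounded by $\omega_{\max} e^{\gamma t}$. All three computations are elementary; the only place needing care is the summation by parts in part 2 (and the analogous telescoping in part 1), where one must track the boundary indices $i=0$ and $i=N$ — introducing the fictitious values $f(\omega_0)=f(\omega_{N+1})=0$ removes this difficulty by making the boundary equations formally identical to the interior ones, after which the force contribution collapses cleanly to $\sum_i \omega_i f(\omega_i)$ and everything else is bookkeeping.
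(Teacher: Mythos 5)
Your proposal is correct and takes essentially the same route as the paper: the paper packages the telescoping/summation-by-parts step as a single weak identity obtained by testing \eqref{eq:IBM} against a general vector $(\varphi_0,\ldots,\varphi_N)$ and then specializes to $\varphi_i\equiv 1$ for part 1 and $\varphi_i=\frac{2}{N}(x_i-\bar x)$ for part 2, which is exactly your computation (your fictitious boundary values $f(\omega_0)=f(\omega_{N+1})=0$ play the same bookkeeping role), and part 3 is handled identically.
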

\begin{proof}
The scalar product of \eqref{eq:IBM} with $(\vp_0(t),\ldots,\vp_N(t))$ gives, after summation by parts,
\begin{align}
\begin{aligned}\label{eq:weakx}
\sum_{i=0}^N \dot{x}_i \vp_i &= \sum_{i=1}^N f(\omega_i) N\left(\vp_i - \vp_{i-1}\right) +\sum_{i=0}^N \vp_i v(x_i) \,.
\end{aligned}
\end{align}
This immediately implies 1) with $\vp_0=\cdots=\vp_N = 1$. The choice $\vp_i(t) = \frac{2}{N}(x_i(t)-\bar{x})$, $i=0,\ldots,N$, gives the time evolution of $\mathcal{V}_x$ proving 2). Statement 3) is a consequence of 
 \[
   x_N(t) - x_0(t) = \frac{1}{N} \sum_{i=1}^N \omega_i(t) \,,
\]
and of the upper bound in Theorem \ref{theo:exuniq}.
\end{proof}

\begin{remark}
	We would like to point out that for the case where the particles are not exposed to an external force, i.e. $v \equiv 0$, the center of mass $\bar{x}$ is conserved and the variance is non-decreasing in time: 
	\begin{align*}
	\dot{\bar{x}}(t) = 0, \quad  \quad \dot{\mathcal{V}}_x(t) = \frac{2}{N}\sum_{i=1}^N f(\omega_i)\omega_i \geq 0\,, \quad \quad \text{for all } t \geq 0\,.
	\end{align*}
	Hence, the particle positions expand around their center of mass, but not too much.
\end{remark}

\section{The macroscopic model}\label{s:macro}

\subsection*{The continuum model in Lagrangian coordinates:} Interpreting the particle index as a discrete Lagrangian variable, the connection to the continuum is made by the definitions
$$
    \Delta s := \frac{1}{N} \,,\qquad s_i := i\Delta s \,,\quad 0\le i\le N \,,
$$
a discretization of the Lagrangian coordinate $s\in [0,1]$. Assuming the existence of a function $\omega(s,t)$, such that 
$\omega_i(t)\approx \omega(s_i,t)$ as $N\to\infty$, the formal limit
of the second equation in \eqref{eq:IBM_om} gives
\(\label{eq:mac-om}
   \dt\omega = - \ds^2 f(\omega) + \ds v(x) \,,\qquad 0<s<1 \,,
\)
a nonlinear diffusion equation with the diffusivity $-f'(\omega)\ge 0$, which is bounded by the Lipschitz continuity of $f$.
The limits of the first and third equation in \eqref{eq:IBM_om} lead to
\(\label{BC:mac-om}
   f(\omega(0,t)) = f(\omega(1,t)) = 0 \,,
\)
equivalent to 
\(
    \omega \ge 1 \,,\qquad s=0,1\,.
\)
This looks like incomplete information on the boundary. However, it is sufficient in view of the degenerate diffusivity. If, on the one
hand, $\omega > 1$ next to the boundary, then the diffusivity vanishes there, the solution does not change with $t$, and the boundary
condition is satisfied. If, on the other hand $\omega(0+,t)\le 1$ (or, respectively, $\omega(1-,t)\le 1$) then the solution needs to take
the boundary value 1.

Similarly, the continuum limit of the particle positions $x(s,t)$ in \eqref{eq:IBM}, satisfying $\ds x = \omega$, solves the Neumann type problem
\(
  &&\dt x = - \ds f(\ds x) + v(x) \,,\qquad 0<s<1 \,, \label{eq:mac-x}\\
  && \ds x\ge 1 \,,\qquad s=0,1 \,. \nonumber
\)
\subsection*{Eulerian coordinates -- the particle density:} Our next goal is to write the equation \eqref{eq:mac-om} in terms of the Eulerian coordinate $x$ instead of the Lagrangian
coordinate $s$. This produces a \emph{moving boundary problem} posed on the $x$-interval $[X_0(t),X_1(t)] := [x(0,t),x(1,t)]$.
The coordinate transformation can be written as
\begin{align}\label{eq:coordinationtransform}
x=X_0(t)+\int_0^s \omega (\sigma,t)d\sigma \,, 
\end{align}
implying
\begin{align*}
\ds \to \omega\dx,\quad  \dt \to \dt -\omega \dx f(\omega) \dx + v \dx \,,
\end{align*}
where \eqref{eq:mac-om} and \eqref{eq:mac-x} have been used. Consequently the Eulerian version of \eqref{eq:mac-om} 
reads
\begin{align*}
\dt\omega = -\omega^2 \dx^2 f(\omega) - v(x) \dx \omega  + v'(x) \omega \,,\qquad X_0 < x < X_1\,.
\end{align*}
This is equivalent to the conservation law 
\begin{align}\label{eq:mac-rho}
	 \dt\rho=\dx \left( \dx f\left(\frac{1}{\rho}\right) - v\rho \right)\,,
\end{align}
for the macroscopic particle density $\rho := 1/\omega$, which is complemented by the boundary conditions
$$
    \rho \le 1 \,,\qquad x=X_0,X_1 \,,
$$
and by the dynamics of the moving boundaries, determined from \eqref{eq:mac-x}:
\(\label{eq:x01}
    \dot X_{0,1} = \left[-\frac{1}{\rho} \dx f\left(\frac{1}{\rho}\right) + v \right] \Bigm|_{x=X_{0,1}} \,.
\)
\subsection*{Jump discontinuities:} 
As a consequence of the degeneracy of the diffusivity $D(\rho) := -\rho^{-2}f'(1/\rho)$ for $\rho\le 1$, the nonlinear diffusion equation 
\eqref{eq:mac-rho} supports jumps between values $\rho(x_*-,t) = 1$ and $\rho(x_*+,t)<1$. The velocity of the jump location $x_*(t)$
is given by the {\em Rankine-Hugoniot condition}
\(\label{eq:RH}
   \dot x_* = \frac{-\dx f(1/\rho)|_{x=x_*-}}{1-\rho|_{x=x_*+}} + v(x_*)\,.
\)
With the obvious changes also the case $\rho(x_*-,t) < 1$ and $\rho(x_*+,t)=1$ can be considered. Such jumps typically separate
regions where $\rho\ge 1$ from regions where $\rho<1$. In the case $\rho(X_{0,1}(t),t)=1$, the 
moving boundary equation \eqref{eq:x01} can be seen as a special case of \eqref{eq:RH}, with $\rho$ continued by zero outside
of $[X_0,X_1]$. Otherwise, when $\rho(X_{0,1}(t),t)<1$, the boundary does not move. Moreover, note that if $v \equiv 0$ and with the additional natural assumption $\dx \rho(x_*(t)-,t)\le 0$, the formula above implies $\dot x_*\ge 0$, i.e. the jump moves towards the region of lower density. 

\subsection*{Special cases for $v \equiv 0$}:
We below mentioned special cases will be considered in the case of no external force on the particles, i.e. $v \equiv 0$.  

First we note that the statements above do not cover the situation of initial data with a smooth transition between $\rho> 1$ and $\rho<1$. Consider $v \equiv 0$ and an initial datum $\rho_0(x)=1-cx$, $c>0$, and the choice $f(\omega) = (1-\omega)_+$. We expect that a discontinuity develops with location 
$x_*(t)$ starting at $x_*(0)=0$. Approximating the denominator on the right hand side of \eqref{eq:RH} by its value at $t=0$, we 
obtain
\begin{align*}
 \dot x_*\approx \frac{1}{x_*} \,,
\end{align*}
and therefore $x_\star(t) \approx \sqrt{2t}$ for small $t$. This behavior with infinite initial velocity also occurs in the Stefan-problem, see e.g.  \cite[Chapter 1, Example 1]{Andreucci2005LectureNO}. It can also be seen on a microscopic level in Figure \ref{fig:micro1a} for $x_0$ and $x_N$.

Another special case, again for $v \equiv 0$, is initial data with $\rho_0>1$ in a bounded interval, $\rho_0=1$ outside of it, and 
$f(\omega) = (1/\omega-1)_+^m$, $m \in \R$. For $m \ge 1$ the shifted density $\rho-1$ solves the {\em porous medium equation} with initial data with bounded 
support, a problem very well studied (see e.g. \cite{Vazquez}). In particular, in that case supp$(\rho-1)$ will grow, and the long-time behavior of 
the solution is given by an explicitly computable self-similar {\em Barenblatt profile.} For $m \in [0,1)$ we are in the case where $\rho-1$ solves the {\em fast diffusion equation}, which also is well investigated, see e.g. \cite{Carrillo:2003aa}. For $m<0$ this phenomenon is known as {\em super-fast diffussion}. For a survey of results regarding these types of nonlinear diffusion we refer the reader to \cite{Vazquez:2006aa}. If the initial data are such that the flux 
$\dx f(1/\rho)$ initially vanishes at the boundary of the support, then a {\em waiting time} phenomenon occurs, where the edges of the 
support start moving at a positive time. Indeed, for initial data where the right-most point is given by $x_*(0)$, such that $\rho(x_*(0)-)>1$ and $\rho(x_*(0)+)=1$, we can calculate under consideration $f\left(1/\rho\right) = (\rho-1)_+^m$ similar to \eqref{eq:RH} the velocity of the jump location
\begin{align*}
	\dot{x}_*(t) = m \Big(\rho(x_*(t)-,t)-1\Big)^{m-2} \pa_x\rho(x,t)_{|_{x=x_*(t)-}}\,.
\end{align*}
One can see clearly that the smaller the exponent $m$ and the closer the left-sided limit $\rho_0(x_*(0)-)$ is to the value 1, the flatter has to be the initial density $\rho_0$ left of the boundary point $x_*(0)$ in order to observe the aforementioned waiting time phenomenon.

Less clear is the situation with initial data of the form $\rho_0(x)=1-cx$, $c>0$ and a general nonlinearity $f$. We conjecture that,
on the one hand, a discontinuity develops with infinite initial speed as above, whenever $x = o(\dx f(1/\rho))$ as $x\to 0-$, and on
the other hand, the discontinuity only appears after a waiting time for $\dx f(1/\rho) = o(x)$ as $x\to 0-$. However, we are not aware
of any rigorous results on these questions.

\subsection*{Decay to equilibrium:}
The dynamics of \eqref{eq:mac-rho} dissipates the $L^2$-norm. After continuation of $\rho$ by zero outside of $[X_0,X_1]$, we
obtain
$$
   \frac{d}{dt} \int_{\R} \rho^2 dx = 2 \int_{\R} \frac{1}{\rho^2} f'\left(\frac{1}{\rho}\right) (\dx\rho)^2 dx - \int_{\R} \rho^2 v'(x) dx \,,
$$
which is non-positiv, if $v$ is non-decreasing. Moreover, if we again consider the special case $v \equiv 0$, we observe that the dissipation vanishes for $\rho< 1$ or $\rho$ independent from $x$. During the evolution we expect to see intervals $I_+(t)$, where 
$\rho\ge 1$, separated from intervals $I_-(t)$, where $\rho < 1$, by moving jump discontinuities, where $\rho=1$ are the boundary conditions for the intervals $I_+$. Therefore we expect that equilibria have intervals $I_{+,\infty}$, where $\rho\equiv 1$, separated by intervals $I_{-,\infty}$, where $\rho<1$ and otherwise arbitrary. The number of $I_{+,\infty}$-intervals might be smaller than that of
$I_+(0)$-intervals, since these intervals might merge by collisions of the moving jump discontinuities. As soon as this coarsening 
process is over, the limit $I_{+,\infty} = [a,b]$ of each interval $I_+(t)$ can be predicted. Let $[c,d]$ be big enough to contain $[a,b]$
with $\rho(c,t)=\rho_0(c),\rho(d,t)=\rho_0(d) < 1$. Then $a$ and $b$ can be computed from the conservation of mass and of the center of mass:
\begin{align}
   && \int_c^d \rho_0 dx = b-a + \int_{[c,d]\setminus [a,b]} \rho_0 dx \quad\Longrightarrow\quad b-a - \int_a^b \rho_0 dx = 0 \,, \label{cond:Iinfty1}\\ 
   && \int_c^d x\rho_0 dx = \frac{b^2-a^2}{2} + \int_{[c,d]\setminus [a,b]} x\rho_0 dx \quad\Longrightarrow\quad 
   \frac{b^2-a^2}{2} - \int_a^b x\rho_0 dx = 0 \,. \label{cond:Iinfty2}
\end{align}

\section{The rigorous macroscopic limit}\label{sec:lim}

The macroscopic limit will be carried out in the  individual based model in terms of the unknowns $\omega_i$, as in \eqref{eq:IBM_om}.
However, since the velocity of $x_0$ may be unbounded as $\Delta s = 1/N \to 0$ (see \eqref{eq:IBMom_pos}), we represent the variables $x_i$ in terms of
their average $\bar x$, satisfying \eqref{eq:barx}.
Therefore we consider the initial value problem
\begin{align}\label{eq:IBM_om_R}
\begin{split}
\dot{\omega}_1 &= \frac{2f\left(\omega_1\right) - f \left(\omega_2\right)}{\Delta s^2} + \frac{v(x_1)-v(x_0)}{\Delta s}\,, \\
\dot{\omega}_i &= - \frac{f\left(\omega_{i-1}\right)  - 2f\left(\omega_i\right) + f \left(\omega_{i+1}\right)}{\Delta s^2} + \frac{v(x_i)-v(x_{i-1})}{\Delta s}\,, \quad i=2,\ldots,N-1\\
\dot{\omega}_N &= \frac{2f\left(\omega_N\right) - f \left(\omega_{N-1}\right)}{\Delta s^2} + \frac{v(x_N)-v(x_{N-1})}{\Delta s}\,, \\
\omega_i(0) &= \omega_{i,0} \,, \qquad  i = 1,\ldots,N \,,
\end{split}
\end{align}
coupled to
\begin{align}\label{eq:IBM_xbar}
	\frac{d\bar x}{dt}  =  \frac{\Delta s}{1+\Delta s}\sum_{j=0}^N v(x_j)\,, \quad \bar x(0) = \frac{\Delta s}{1+\Delta s}\sum_{j=0}^N x_{j,0} \,,
\end{align}
with
\begin{align}\label{eq:IBM_xi}
	x_i = \bar x - \frac{\Delta s}{1+\Delta s}\sum_{j=1}^N (1 - (j-1)\Delta s)\omega_j + \Delta s \sum_{j=1}^i  \omega_j\,, \quad 0\le i \le N\,.
\end{align}
The last relation is obtained by averaging the expression for $x_i$ in \eqref{eq:IBMom_pos}, which allows to express $x_0$ in terms of $\bar x$.
For the initial data we still assume
\(\label{IC-ass}
   0 \le \omega_{\min} := \inf_{i\in\mathbb{Z}} \omega_{i,0} \,,\qquad \sup _{i\in\mathbb{Z}} \omega_{i,0}  =: \omega_{\max} < \infty \,.
\)
This problem is equivalent to \eqref{eq:IBM_om}, \eqref{eq:IBMom_pos}, and therefore the results of Theorem \ref{theo:exuniq} remain valid, i.e.
the existence and uniqueness of a global solution of \eqref{eq:IBM_om_R}--\eqref{eq:IBM_xi}, satisfying 
$$
	\omega_{\min} e^{-\gamma t} \le \omega_i(t) \le \omega_{\max} e^{\gamma t} \,,\quad t \ge 0 \,, i \in \mathbb{Z} \,,
$$
where we recall the notation $\gamma := \|v'\|_{\infty}$. The connection to the continuum is made by the definition of the piecewise constant interpolants
\begin{equation}\label{om-Delta}
   \omega_{\Delta s}(s,t) := \omega_i(t) \,,\, x_{\Delta s}(s,t) := x_i(t)\qquad\text{for } (i-1)\Delta s \le s < i\Delta s \,,\, t\ge 0 \,.
\end{equation}
With these definitions, \eqref{eq:IBM_xbar}, \eqref{eq:IBM_xi} imply
\begin{equation}\label{eq:xbar-x-approx}
\begin{split}
   \frac{d\bar x}{dt} &= \int_0^1 v(x_{\Delta s}(s,t))ds + O(\Delta s) \,,\quad \bar x(0) = \int_0^1 x_{\Delta s}(s,0)ds + \mathcal{O}(\Delta s) \,,\\
   x_{\Delta s}(s,t) &= \bar x(t) - \int_0^1 (1-s')\omega_{\Delta s}(s',t)ds' + \int_0^s \omega_{\Delta s}(s',t)ds' + \mathcal{O}(\Delta s) \,.
\end{split}
\end{equation}
For passing to the macroscopic limit in the nonlinearities $f$ and $v$, some regularity of $\omega$ and $x$ will be needed. Since the observations of the 
preceding section show that jump discontinuities of $\omega$ have to be expected, bounded variation is the best regularity we can hope for.

\begin{lemma}\label{lem:TV}
Let $(\omega_1,\ldots,\omega_N,\bar x, x_0,\ldots,x_N)$ be a solution of \eqref{eq:IBM_om_R}--\eqref{eq:IBM_xi}, with initial data satisfying \eqref{IC-ass} and 
$TV(\omega_{\Delta s}(\cdot,0))$ bounded idependently from $\Delta s$. Then 
$$
   \omega_{\Delta s},x_{\Delta s} \in L^\infty_{loc}([0,\infty);BV([0,1]))  \quad\mbox{uniformly as }\Delta s\to 0 \,,
$$
where $\omega_{\Delta s}$ is defined in \eqref{om-Delta}.
\end{lemma}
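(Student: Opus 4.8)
The plan is to prove a total‑variation bound for $\omega_{\Delta s}(\cdot,t)$ that is uniform in $\Delta s$ and locally bounded in $t$, by a discrete version of the $BV$‑estimate for degenerate parabolic equations; the bound for $x_{\Delta s}$ then comes essentially for free. Write $z_i:=\omega_{i+1}-\omega_i$ and, using that $f$ is Lipschitz and nonincreasing, $f(\omega_{i+1})-f(\omega_i)=-a_{i+1/2}z_i$ with $0\le a_{i+1/2}\le \mathrm{Lip}(f)$ (setting $a_{i+1/2}:=0$ when $z_i=0$), so that $TV(\omega_{\Delta s}(\cdot,t))=\sum_{i=1}^{N-1}|z_i(t)|$. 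For $2\le i\le N-2$ the interior equations of \eqref{eq:IBM_om_R} give, after differentiating $|z_i|$ in time (legitimate for a.e.\ $t$, the $\omega_i$ being $C^1$),
\begin{align*}
\frac{d}{dt}|z_i|=\frac{\sgn(z_i)\,(a_{i+3/2}z_{i+1}-2a_{i+1/2}z_i+a_{i-1/2}z_{i-1})}{\Delta s^2}+\sgn(z_i)\,\frac{v(x_{i+1})-2v(x_i)+v(x_{i-1})}{\Delta s}\,.
\end{align*}
Summing over $i$ and collecting the diffusive part by the edge index, the coefficient of $z_i$ becomes $a_{i+1/2}\,(\sgn(z_{i-1})-2\sgn(z_i)+\sgn(z_{i+1}))$, whose product with $z_i$ is $\le0$ because $a_{i+1/2}\ge0$ and $z_i\sgn(z_{i\pm1})\le|z_i|=z_i\sgn(z_i)$; hence the interior diffusion is $TV$‑nonincreasing. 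The velocity part I would control by a second‑order Taylor expansion: with $x_{i+1}-x_{i-1}=\Delta s(\omega_i+\omega_{i+1})$ and $0\le\omega_i\le\omega_{\max}e^{\gamma t}$ from Theorem~\ref{theo:exuniq}, $|v(x_{i+1})-2v(x_i)+v(x_{i-1})|\le\Delta s\,(\|v''\|_\infty\Delta s(\omega_i+\omega_{i+1})\omega_{i+1}+\gamma|z_i|)$, so the $1/\Delta s$‑weighted sum over $i$ is $\le\gamma\sum_i|z_i|+C(t)$ with $C(t)$ uniform in $\Delta s$ — this is exactly where the hypothesis $v\in W^{2,\infty}$ is used.

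The genuine obstacle is at the two boundary nodes $i=1$ and $i=N$, where the equations in \eqref{eq:IBM_om_R} differ: repeating the computation there leaves, after summation, residual terms of size $f(\omega_1)/\Delta s^2$ and $f(\omega_N)/\Delta s^2$, which cannot be absorbed, since $f(\omega_1)$ and $f(\omega_N)$ are in general neither pointwise small nor integrable to $\mathcal{O}(1)$ in $t$. My remedy is to augment the functional by the discrete counterpart of the expected macroscopic boundary condition $\omega\ge1$ at $s=0,1$: introduce ghost values $\omega_0(t):=\max\{1,\omega_1(t)\}$ and $\omega_{N+1}(t):=\max\{1,\omega_N(t)\}$ (so $f(\omega_0)=f(\omega_{N+1})=0$) and track
\begin{align*}
T(t):=(1-\omega_1(t))_+ + \sum_{i=1}^{N-1}|\omega_{i+1}(t)-\omega_i(t)| + (1-\omega_N(t))_+ = \sum_{i=0}^{N}|z_i(t)|\,,
\end{align*}
with $z_0:=\omega_1-\omega_0\le0$ and $z_N:=\omega_{N+1}-\omega_N\le0$. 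With these ghosts the boundary equations take the interior form $\dot\omega_i^{\mathrm{diff}}=\Delta s^{-2}(2f(\omega_i)-f(\omega_{i-1})-f(\omega_{i+1}))$ for $1\le i\le N$; moreover $a_{1/2}z_0=-f(\omega_1)$, $a_{N+1/2}z_N=f(\omega_N)$ with $0\le a_{1/2},a_{N+1/2}\le\mathrm{Lip}(f)$, and $\frac{d}{dt}|z_0|=-\dot\omega_1\,\mathbf{1}_{\{\omega_1<1\}}$ supplies precisely the term $-a_{1/2}|z_0|=-f(\omega_1)$ that cancels the left residual, together with the extra term $\sgn(z_0)\,a_{3/2}z_1$ that completes the discrete second difference at the first interior edge; symmetrically on the right. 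One then checks edge by edge that the diffusive part of $\dot T$ is $\le0$ and that the velocity part (now including two harmless Lipschitz terms from $\frac{d}{dt}|z_0|$ and $\frac{d}{dt}|z_N|$) is $\le\gamma T(t)+C(t)$; Gronwall's lemma, using $0\le(1-\omega_{1,0})_+,(1-\omega_{N,0})_+\le1$ and the assumed uniform bound on $TV(\omega_{\Delta s}(\cdot,0))$, then gives $TV(\omega_{\Delta s}(\cdot,t))\le T(t)\le C(T)$ on $[0,T]$, uniformly in $\Delta s$. Together with $\|\omega_{\Delta s}(\cdot,t)\|_\infty\le\omega_{\max}e^{\gamma t}$ this is the assertion for $\omega_{\Delta s}$.

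For $x_{\Delta s}$ almost nothing remains: since $\omega_i\ge0$, $x_{\Delta s}(\cdot,t)$ is nondecreasing in $s$, so its spatial total variation equals $x_N(t)-x_0(t)\le e^{\gamma t}\omega_{\max}$ by the third assertion of Theorem~\ref{theo:char}, and $\|x_{\Delta s}(\cdot,t)\|_\infty$ is bounded on bounded time intervals because $|\dot{\bar x}|\le\|v\|_\infty$ and $|x_i-\bar x|\le x_N-x_0$; hence $x_{\Delta s}\in L^\infty_{loc}([0,\infty);BV([0,1]))$ uniformly as well. In summary, the only delicate step is the boundary cancellation above; the remainder is the standard Kruzhkov/Crandall‑type $BV$ bookkeeping combined with Gronwall's inequality.
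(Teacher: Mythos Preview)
Your approach follows the same overall strategy as the paper: differentiate the discrete total variation in time, use summation by parts together with the monotonicity of $f$ to show that the diffusive contribution is nonpositive, Taylor-expand the velocity differences $v(x_{i+1})-2v(x_i)+v(x_{i-1})$ (this is exactly where $v\in W^{2,\infty}$ enters in the paper as well), and close by Gronwall. The treatment of $x_{\Delta s}$ via monotonicity and Theorem~\ref{theo:char}, 3) is identical to the paper's.

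The one genuine difference is at the boundary. The paper writes the summed-by-parts diffusive term as
\[
\Sigma_1=-\sum_i\big(\sgn(\omega_{i+1}-\omega_i)-\sgn(\omega_i-\omega_{i-1})\big)\,\frac{2f(\omega_i)-f(\omega_{i-1})-f(\omega_{i+1})}{\Delta s^2}
\]
and asserts $\Sigma_1\le 0$ ``by separately checking the cases''; it does not isolate the endpoint contributions, which---as you correctly observe---are of the form $\sgn(z_{N-1})(2f(\omega_N)-f(\omega_{N-1}))/\Delta s^2$ and need not have a sign. Your augmented functional $T(t)=(1-\omega_1)_++TV(\omega_{\Delta s})+(1-\omega_N)_+$ with ghost values $\omega_0=\max\{1,\omega_1\}$, $\omega_{N+1}=\max\{1,\omega_N\}$ is therefore a real addition: it restores the interior form of the boundary equations and makes the edge-by-edge sign check go through at $i=1$ and $i=N$ as well. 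This buys you a complete proof where the paper's presentation leaves a gap. One small slip: with your conventions $z_N=\omega_{N+1}-\omega_N\ge 0$, not $\le 0$; this does not affect the identity $|z_N|=(1-\omega_N)_+$ or the subsequent cancellation.
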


\begin{proof}
The result for $x_{\Delta s}(\cdot,t)$ is an immediate consequence of its monotonicity and of Theorem \ref{theo:char}, 3).
The bound on the total variation of $\omega_{\Delta s}$ will be obtained by calculating its time derivative:
\begin{eqnarray*}
	\frac{d}{dt} TV(\omega_{\Delta s}(\cdot,t)) &=& \frac{d}{dt} \sum_{i =1}^{N-1} |\omega_{i+1}-\omega_i | \\
	&=& - \sum_{i=1}^{N-1}\left( \sgn\left( \omega_{i+1}-\omega_i \right) - \sgn\left( \omega_{i}-\omega_{i-1} \right) \right) \frac{2f(\omega_i) - f(\omega_{i-1}) - f(\omega_{i+1})}{\Delta s^2} \\
	&& + \sum_{i =1}^{N-1} \sgn\left( \omega_{i+1}-\omega_i \right) \frac{v(x_{i+1}) - 2 v(x_i) + v(x_{i-1})}{\Delta s^2} \\
	&=& \Sigma_1 + \Sigma_2\,,
\end{eqnarray*}
where we used summation by parts for the formulation of the term $\Sigma_1$. By separately checking the cases that the signs are equal or different we observe 
$$
	\Sigma_1 \leq 0\,.
$$
For dealing with the second term $\Sigma_2$, we note that
\begin{align*}
	\frac{v(x_{i+1}) - 2 v(x_i) + v(x_{i-1})}{\Delta s^2} = v'(x_i)\left(\omega_{i+1}-\omega_i\right) + \frac{\Delta s}{2} \left( v''(\tilde{x}_i) \omega_{i+1}^2 + v''(\hat{x}_i) \omega_i^2\right)\,,
\end{align*}
where the remainder term contains mean values $\tilde{x}_i \in (x_{i}, x_{i+1})$ and $\hat{x}_i \in (x_{i-1}, x_{i})$. The boundedness of $\omega_{\Delta s}$ and Assumption 
\eqref{v-ass} imply
\begin{align*}
	\Sigma_2 \leq \gamma \,TV(\omega_{\Delta s}) + c(t)\,,
\end{align*}
for $c(t)>$ independent from $\Delta s$. Combining our results, we arrive at
\begin{align*}
	\frac{d}{dt} TV(\omega_{\Delta s}(\cdot, t)) \leq& \gamma\, TV(\omega_{\Delta s}(\cdot, t)) + c(t)\,,
\end{align*}
which gives a bound on $TV(\omega_{\Delta s}(\cdot,t))$ on every finite time interval by the Gronwall inequality. 
\end{proof}

These total variation bounds can be used to also get some regularity in time. 

\begin{lemma}\label{lem:simon}
With the assumptions of Lemma \ref{lem:TV}, $\dt \omega_{\Delta s}, \dt x_{\Delta s} \in L^\infty_{loc}([0,\infty); W^{-1,\infty}((0,1)))$ uniformly as $\Delta s \to 0$.
\end{lemma}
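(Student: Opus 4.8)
The plan is to test the two evolution equations against smooth functions and bound the resulting pairings by the $W^{1,1}$-norm of the test function, which is exactly what $W^{-1,\infty}((0,1))$ duality requires (recall $W^{-1,\infty}$ is the dual of $W^{1,1}_0$, so a bound by $\|\psi'\|_{L^1}$ suffices). I will treat $\dt x_{\Delta s}$ first, since it is the easier case, and then $\dt\omega_{\Delta s}$.

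\textbf{Step 1: the bound for $\dt x_{\Delta s}$.}
Differentiating the relation \eqref{eq:xbar-x-approx} for $x_{\Delta s}$ in time, one sees that $\dt x_{\Delta s}(s,t) = \frac{d\bar x}{dt} - \int_0^1(1-s')\dt\omega_{\Delta s}(s',t)\,ds' + \int_0^s \dt\omega_{\Delta s}(s',t)\,ds' + \mathcal{O}(\Delta s)$. Rather than going through $\dt\omega$, I would instead go back to the primitive form: $x_i$ solves the scaled ODE \eqref{eq:IBM} in rescaled time, so $\dt x_{\Delta s}$ is, up to $\mathcal{O}(\Delta s)$, the piecewise-constant interpolant of $N(f(\omega_i)-f(\omega_{i+1})) + v(x_i)$. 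Testing against $\psi\in W^{1,1}_0$ and summing by parts moves the discrete difference of $f(\omega_i)$ onto a discrete difference of $\psi$, producing $\sum_i f(\omega_i)(\psi((i)\Delta s)-\psi((i-1)\Delta s)) = \int_0^1 f(\omega_{\Delta s})\psi'\,ds + \mathcal{O}(\Delta s)$. Since $f$ is bounded (by $1$) and $v$ is bounded, this pairing is controlled by $\|f\|_\infty\|\psi'\|_{L^1} + \|v\|_\infty\|\psi\|_{L^1} \le C\|\psi\|_{W^{1,1}}$, uniformly in $\Delta s$ and locally uniformly in $t$. Hence $\dt x_{\Delta s}$ is uniformly bounded in $W^{-1,\infty}((0,1))$.

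\textbf{Step 2: the bound for $\dt\omega_{\Delta s}$.}
Here the right-hand side of \eqref{eq:IBM_om_R} contains a discrete second difference of $f(\omega)$ divided by $\Delta s^2$, which is far too singular to pair directly against $\psi$. The remedy is to observe that $-\frac{f(\omega_{i-1})-2f(\omega_i)+f(\omega_{i+1})}{\Delta s^2}$ is itself a discrete divergence: it equals $-\frac{1}{\Delta s}\bigl(J_{i+1/2}-J_{i-1/2}\bigr)$ with flux $J_{i+1/2} := \frac{f(\omega_{i+1})-f(\omega_i)}{\Delta s}$. Testing $\dt\omega_{\Delta s}$ against $\psi\in W^{1,1}_0((0,1))$ and summing by parts once transfers one difference onto $\psi$, yielding $\langle\dt\omega_{\Delta s},\psi\rangle = \sum_i J_{i+1/2}\bigl(\psi(i\Delta s)-\psi((i-1)\Delta s)\bigr) + (\text{drift term})$, which is $\int_0^1 \widetilde J_{\Delta s}\,\psi'\,ds + \mathcal{O}(\Delta s) + (\text{drift})$, where $\widetilde J_{\Delta s}$ is the interpolant of the fluxes $J_{i+1/2}$. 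The drift term, coming from $\frac{v(x_i)-v(x_{i-1})}{\Delta s}$, is the interpolant of a discrete derivative of $v$, which by the mean value theorem equals $v'(\cdot)\,\omega_{\Delta s} + \mathcal{O}(\Delta s)$ and is thus bounded in $L^\infty$ by $\gamma\,\omega_{\max}e^{\gamma t}$; it contributes a term controlled by $\|\psi\|_{L^1}$.

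\textbf{Step 3: the flux is uniformly bounded.}
The crux is that $\|\widetilde J_{\Delta s}\|_{L^\infty}$ is uniformly bounded. By the Lipschitz continuity of $f$ with constant $L$, one has $|J_{i+1/2}| = \frac{|f(\omega_{i+1})-f(\omega_i)|}{\Delta s} \le L\frac{|\omega_{i+1}-\omega_i|}{\Delta s}$. The obstacle is that $\frac{|\omega_{i+1}-\omega_i|}{\Delta s}$ need not be bounded pointwise — summed it is the total variation, controlled by Lemma \ref{lem:TV}, but that gives only an $L^1$ bound on $\widetilde J_{\Delta s}$, hence only $W^{-1,1}$ rather than $W^{-1,\infty}$. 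To close the $L^\infty$ gap I would instead bound the flux without differencing the Lipschitz constant: since $f$ is nonincreasing and maps into $[0,1]$, and is supported on $[0,1]$, the discrete flux $J_{i+1/2}=\frac{f(\omega_{i+1})-f(\omega_i)}{\Delta s}$ is a better-behaved object than the raw gradient of $\omega$. I would exploit the structure of the scheme directly: the flux $J_{i+1/2}$ is, up to the drift, the time-accumulated quantity whose discrete derivative drives $\dot\omega_i$, and I expect a maximum-principle argument — analogous to the one proving the $\omega_i$ bounds in Theorem \ref{theo:exuniq} — to yield a pointwise-in-$i$, uniform-in-$\Delta s$ bound on $J_{i+1/2}$ on finite time intervals. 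Concretely, one sets up an ODE comparison for $\max_i |J_{i+1/2}(t)|$ and uses the sign structure of the discrete diffusion together with $\|f'\|_\infty<\infty$ to show this maximum grows at most like $e^{Ct}$ from its initial value $\max_i|J_{i+1/2}(0)|$, which is finite under the $\Delta s$-uniform hypotheses on the initial data.

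\textbf{Main obstacle.}
The hard part is precisely Step 3: upgrading the $L^1$ control of the discrete flux (which is all that the total-variation bound of Lemma \ref{lem:TV} supplies) to the $L^\infty$ control that $W^{-1,\infty}$ demands. A crude Lipschitz estimate $|J_{i+1/2}|\le L\,|\omega_{i+1}-\omega_i|/\Delta s$ is not enough, because $|\omega_{i+1}-\omega_i|/\Delta s$ can blow up as $\Delta s\to0$ near a developing jump. The resolution must use the specific discrete dynamics — either a discrete maximum principle for the flux variable $J_{i+1/2}$ itself, or an observation that near a forming discontinuity at least one of $\omega_i,\omega_{i+1}$ exceeds $1$, where $f$ and hence the flux contribution vanishes, so that the flux stays bounded even though the gradient of $\omega$ does not. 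I expect that establishing a uniform pointwise bound on $J_{i+1/2}$ via such a comparison argument is the step requiring the most care.
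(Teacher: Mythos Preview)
Your Steps 1 and 2 are essentially what the paper does: test, sum by parts once, and put the discrete difference onto the test function. The divergence is in Step 3, and it is a self-imposed obstacle. The paper does \emph{not} attempt an $L^\infty$ bound on the discrete flux $J_i = (f(\omega_i)-f(\omega_{i-1}))/\Delta s$. Instead it tests against $\vp\in W^{1,\infty}_0((0,1))$, so that after summation by parts the diffusive contribution is estimated by
\[
\Bigl|\sum_i J_i\,(\vp_i-\vp_{i-1})\Bigr| \le \|\vp'\|_{L^\infty}\sum_i |f(\omega_i)-f(\omega_{i-1})| \le \|\vp'\|_{L^\infty}\,L_f\,TV(\omega_{\Delta s}(\cdot,t)) \,,
\]
and the right-hand side is exactly what Lemma~\ref{lem:TV} controls. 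The drift term is handled as you say, via the mean-value theorem and the $L^\infty$ bound on $\omega_{\Delta s}$. So the flux is only ever used in $\ell^1$ (i.e.\ total variation of $f(\omega_{\Delta s})$), paired against $\|\vp'\|_{L^\infty}$; no pointwise flux bound is required.

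The difficulty you flag comes from insisting that the pairing be bounded by $\|\psi'\|_{L^1}$, which would indeed force $\widetilde J_{\Delta s}\in L^\infty$. In the paper's usage the target space is the dual of $W^{1,\infty}_0$; whether one writes this as $W^{-1,\infty}$ is a notational choice, but for the downstream Aubin--Lions--Simon compactness one only needs a Banach space $Y$ with $L^q((0,1))\hookrightarrow Y$ continuously, and $(W^{1,\infty}_0)^*$ serves perfectly. Your proposed maximum principle for $J_{i+1/2}$ is therefore unnecessary, and likely hard to carry out: near a forming jump $|\omega_{i+1}-\omega_i|/\Delta s$ really can diverge, and although the support condition on $f$ mitigates this, converting that observation into a uniform $L^\infty$ flux bound would be substantially more work than the paper's one-line $TV$ argument.
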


\begin{proof} For a test function $\vp\in W^{1,\infty}_0((0,1))$ we define
\begin{equation}\label{phi_i}
  \vp_i := \frac{1}{\Delta s} \int_{(i-1)\Delta s}^{i\Delta s} \vp\,ds \,,\qquad 
     J_i(t) := \frac{f(\omega_i(t)) - f(\omega_{i-1}(t))}{\Delta s} \,,
\end{equation}
and compute
\begin{align*}
&\int_0^1 \dt \omega_{\Delta s} \vp \, ds   = \sum_{i=1}^N \dot\omega_i \vp_i \Delta s \\
=& -\sum_{i=1}^N \frac{J_{i+1}-J_i}{\Delta s} \vp_i  \Delta s + \sum_{i=1}^N \frac{v(x_i)-v(x_{i-1})}{\Delta s}\vp_i  \Delta s \\
=& \sum_{i=1}^N J_{i} \frac{\vp_i-\vp_{i-1}}{\Delta s} \Delta s + \sum_{i=1}^N v'(\tilde{x}_i) \omega_i \vp_i  \Delta s\,,
\end{align*}
This leads to the estimate
\begin{align*}
   \left| \int_0^1 \dt \omega_{\Delta s}(s,t) \vp(s) \, ds \right| &\leq \|\vp'\|_{L^\infty((0,1))} \sum_{i=1}^N \left| f(\omega_i(t)) - 
       f(\omega_{i-1}(t)) \right| +  \gamma \|\vp\|_{L^\infty((0,1)) \sum_{i=1}^N \omega_i \Delta s} \\
   & \leq \|\vp'\|_{L^\infty((0,1))} L_f\,TV(\omega_{\Delta s}(\cdot,t))+  \gamma \|\vp\|_{L^\infty((0,1))}\omega_{max}e^{\gamma t} \,,\qquad t\ge 0 \,,
\end{align*}
where we have used the Lipschitz constant $L_f$ of $f$, Lemma \ref{lem:TV}, and Theorem \ref{theo:exuniq}.\\
For the time derivative of $x_{\Delta s}$ we proceed similarly:
\begin{align*}
& \left| \int_0^1 \dt x_{\Delta s} \vp \, ds \right| = \left| \sum_{i=1}^N \dot x_i \vp_i \Delta s \right| \\
\le & \sum_{i=1}^N f(\omega_i) \left| \frac{\vp_i-\vp_{i-1}}{\Delta s} \right| \Delta s + \sum_{i=1}^N \left| v(x_i)\vp_i\right|  \Delta s \\
=& \leq f(0) \|\vp'\|_{L^\infty((0,1))} +  v_{max} \|\vp\|_{L^\infty((0,1))} \,.
\end{align*}
\end{proof}

Since, for $1\le q<\infty$, $BV([0,1])\subset L^q((0,1)) \subset W^{-1,\infty}((0,1))$, where the first inclusion is compact \cite[Corollary 3.49]{AmbrosioFuscoPallara}, we conclude from Lemmas \ref{lem:TV} and \ref{lem:simon} and from \cite{simon1986compact} that $\{\omega_{\Delta s}, \Delta s>0\}$ and $\{x_{\Delta s}, \Delta s>0\}$ are relatively compact in $L^p_{loc}\left([0,\infty)\times [0,1]\right)$ for every $p < \infty$. This finally leads to the following convergence result:

\begin{theorem}\label{theo:limit}
Let $\omega_{\Delta s}(\cdot,0) \in BV([0,1])$ and let \eqref{IC-ass}, \eqref{f-ass}, and \eqref{v-ass} hold. Let $\omega_{\Delta s}$ be defined by \eqref{om-Delta} in terms of the 
solution of \eqref{eq:IBM_om_R}--\eqref{eq:IBM_xi}. Then $\lim_{\Delta s\to 0}\omega_{\Delta s} = \omega$ and $\lim_{\Delta s\to 0}x_{\Delta s} = x$ in $L^p_{loc}([0,\infty)\times [0,1])$ for any 
$1\le p<\infty$, restricting to appropriate subsequences. The limit $\omega \in L^\infty((0,\infty)\times (0,1))$ is a weak solution of the initial value problem for
\eqref{eq:mac-om}, satisfying the boundary conditions \eqref{BC:mac-om} weakly with respect to time. The function $x$ is determined from
\begin{align}
   \frac{d\bar x}{dt} &= \int_0^1 v(x)ds \,,\quad \bar x(0) = \int_0^1 x(s,0)ds  \,,\nonumber\\
   x(s,t) &= \bar x(t) - \int_0^1 (1-s')\omega(s',t)ds' + \int_0^s \omega(s',t)ds'  \,.\label{eq:xbar-x}
 \end{align}
\end{theorem}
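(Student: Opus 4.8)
\emph{Setup and compactness.} The compactness needed is already in place: as noted just above, $\{\omega_{\Delta s}\}$ and $\{x_{\Delta s}\}$ are relatively compact in $L^p_{loc}([0,\infty)\times[0,1])$ for every $p<\infty$. I would extract a subsequence with $\omega_{\Delta s}\to\omega$ and $x_{\Delta s}\to x$ in $L^p_{loc}$ and pointwise a.e., and (refining once more) with $\omega_{\Delta s}(\cdot,t)\to\omega(\cdot,t)$ in $L^1(0,1)$ for a.e.\ $t$ and $\omega_{\Delta s}(\cdot,0)\to\omega_0$ in $L^1(0,1)$. The a.e.\ bounds of Theorem~\ref{theo:exuniq} pass to the limit, and by Lemma~\ref{lem:TV} together with lower semicontinuity of the total variation, $\omega(\cdot,t)\in BV([0,1])$ for a.e.\ $t$; in particular the one-sided traces of $\omega(\cdot,t)$, hence of $f(\omega(\cdot,t))$, at $s=0,1$ are well defined, which is what the boundary conditions will refer to.

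\emph{The function $x$ and the interior equation.} Passing to the limit in \eqref{eq:xbar-x-approx} is straightforward: the $\mathcal O(\Delta s)$ terms vanish, $\bar x_{\Delta s}$ is uniformly Lipschitz in $t$ (its derivative is bounded by $\|v\|_\infty$) and hence converges uniformly on bounded time intervals, along a subsequence, to some $\bar x$, $v(x_{\Delta s})\to v(x)$ boundedly a.e., and $\omega_{\Delta s}\to\omega$ in $L^1$; this yields \eqref{eq:xbar-x} and identifies the $L^p$-limit $x$ with the function defined there, so that $\partial_s x=\omega$ and $\partial_s v(x)=v'(x)\omega$ a.e. For the interior equation I would test \eqref{eq:IBM_om_R} with $\varphi_i(t):=\Delta s^{-1}\int_{(i-1)\Delta s}^{i\Delta s}\varphi(s,t)\,ds$, $\varphi\in C_c^\infty((0,1)\times[0,\infty))$; for $\Delta s$ small $\operatorname{supp}\varphi$ avoids $s=0,1$, so after integrating in $t$ and summing by parts twice (no boundary contributions, one step further than in the proof of Lemma~\ref{lem:simon}) one may pass to the limit term by term, using $f(\omega_{\Delta s})\to f(\omega)$ in $L^1_{loc}$ (a.e.\ convergence, $0\le f\le1$) against $\Delta s^{-2}(\varphi_{i+1}-2\varphi_i+\varphi_{i-1})\to\partial_s^2\varphi$ uniformly, $v'(\tilde x_i)\omega_i\to v'(x)\omega$ boundedly a.e.\ (since $\tilde x_i\in(x_{i-1},x_i)$ and $|x_i-x_{i-1}|\le\Delta s\,\omega_{\max}e^{\gamma t}\to0$), and $\int_0^1\omega_{\Delta s}(s,0)\varphi(s,0)\,ds\to\int_0^1\omega_0\varphi(s,0)\,ds$. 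The result is exactly the weak form of $\partial_t\omega=-\partial_s^2 f(\omega)+\partial_s v(x)$ on $(0,1)\times(0,\infty)$ with initial datum $\omega_0$, against test functions vanishing at $s=0,1$.

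\emph{The boundary conditions.} The equations for $\omega_1$ and $\omega_N$ in \eqref{eq:IBM_om_R} are the interior one with the ghost values $f(\omega_0):=f(\omega_{N+1}):=0$; summing over $i=1,\dots,k$ and telescoping gives $\Delta s^2\frac{d}{dt}\sum_{i=1}^k\omega_i=f(\omega_1)+f(\omega_k)-f(\omega_{k+1})+\mathcal O(ke^{\gamma t}\Delta s^2)$. Since $\sum_{i=1}^k\omega_i=\mathcal O(ke^{\gamma t})$ by Theorem~\ref{theo:exuniq}, integrating in time yields $\int_0^T(f(\omega_1)+f(\omega_k)-f(\omega_{k+1}))\,dt=\mathcal O(k\Delta s^2)$ uniformly in $\Delta s$; summing this once more over $k=1,\dots,N$ and using $f\le1$ forces $\int_0^T f(\omega_1)\,dt=\mathcal O(\Delta s)$, and then $\int_0^T f(\omega_i)\,dt=i\int_0^T f(\omega_1)\,dt+\mathcal O(i^2\Delta s^2)$, whence $\int_0^T\!\int_0^h f(\omega_{\Delta s})\,ds\,dt=\mathcal O(h^2)$ uniformly in $\Delta s$. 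Letting $\Delta s\to0$ (using $L^1$-convergence on the fixed set $(0,T)\times(0,h)$) gives $\int_0^T\!\int_0^h f(\omega)\,ds\,dt=\mathcal O(h^2)$, so $h^{-1}\int_0^T\!\int_0^h f(\omega)\,ds\,dt\to0$ as $h\to0$; on the other hand, for a.e.\ $t$ the $BV$ trace satisfies $h^{-1}\int_0^h f(\omega(s,t))\,ds\to f(\omega)(0^+,t)$, so dominated convergence ($0\le f\le1$) forces $\int_0^T f(\omega)(0^+,t)\,dt=0$, i.e.\ $f(\omega)(0^+,\cdot)=0$ a.e.\ in $t$. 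The argument at $s=1$ is identical, which is \eqref{BC:mac-om} in the weak-in-time sense.

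\emph{Expected main obstacle.} Steps for $x$ and for the interior equation are routine consequences of the compactness; the delicate point is the last one, since the boundary value of $f(\omega)$ cannot be read off the $L^1$-limit directly — an $L^1$-convergent sequence with bounded total variation can have non-converging traces — so the quantitative $\mathcal O(\Delta s)$ rate for $\int_0^T f(\omega_1)\,dt$ obtained from the ghost-point structure is essential. Secondary technical points are the uniform-in-time Lipschitz bound on $\bar x_{\Delta s}$ used to identify the $L^p$-limit $x$ with the function in \eqref{eq:xbar-x}, and checking that the $\mathcal O(\Delta s)$ remainders in \eqref{eq:IBM_om_R} and \eqref{eq:xbar-x-approx} are harmless in the limit.
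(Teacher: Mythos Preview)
Your proposal is correct. For the interior weak formulation and for identifying $x$ via \eqref{eq:xbar-x}, you follow essentially the paper's route (test against cell averages $\vp_i$, sum by parts, pass to the limit by $L^p$-convergence and Lipschitz continuity of $f,v$).

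The difference lies in the boundary conditions. The paper observes directly from the first and last equations of \eqref{eq:IBM} that
\[
   f(\omega_1)=\Delta s\bigl(v(x_0)-\dot x_0\bigr)\,,\qquad f(\omega_N)=\Delta s\bigl(\dot x_N-v(x_N)\bigr)\,;
\]
since $x_0,x_N$ are bounded (Theorem~\ref{theo:char}), testing against $\psi(t)$ and integrating by parts in $t$ yields $\int_0^T f(\omega_1)\psi\,dt=\mathcal O(\Delta s)$ in one line, replacing your telescoping-and-double-summation argument entirely. On the other hand, your passage from this discrete estimate to the actual trace statement $f(\omega)(0^+,\cdot)=0$ a.e.\ --- via the uniform bound $\int_0^T\!\int_0^h f(\omega_{\Delta s})\,ds\,dt=\mathcal O(h^2)$ and the existence of $BV$ traces --- is spelled out more carefully than in the paper, which leaves that last step implicit (and it is not automatic, since $L^1$-convergence alone does not control traces). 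Thus the paper's shortcut buys brevity for the $\mathcal O(\Delta s)$ estimate, while your route buys a cleaner justification of the limiting boundary condition; combining the paper's one-line identity with your trace argument would be the most economical proof.
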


\begin{proof}
For a test function $\vp\in C_0^\infty([0,\infty)\times (0,1))$ we test \eqref{eq:IBM_om_R} against $\vp_i(t)$ (defined in \eqref{phi_i}), noting that $\vp_1=\vp_N=0$
for $\Delta s$ small enough.
After an integration by parts with respect to $t\ge 0$ and summation by parts with respect to $i$ we obtain
\begin{align}
  &\int_0^1 \omega_{\Delta s}(s,0)\vp(s,0) ds + \int_0^\infty \int_0^1 \omega_{\Delta s} \dt\vp\,ds\,dt \nonumber\\
  =& \int_0^\infty \sum_{i=2}^{N-1} f(\omega_i) \frac{\vp_{i+1}-2\vp_i +\vp_{i-1}}{\Delta s^2} \Delta s\, dt 
  \,+ \int_0^\infty \sum_{i =2}^{N-1} v(x_i)\frac{\vp(x_{i+1})-\vp(x_i)}{\Delta s} \Delta s \, dt \label{om-weak-discr} \\
    =& \int_0^\infty \int_0^1 f(\omega_{\Delta s})\pa_s^2\vp \,ds\,dt \,+ \int_0^\infty \int_0^1 v(x_{\Delta s}) \pa_s\vp \, ds \, dt + \mathcal{O}(\Delta s)\,, \nonumber
\end{align}
where the last equation follows from
$$
    \frac{\vp_{i+1}-2\vp_i+\vp_{i-1}}{\Delta s^2} = \frac{1}{\Delta s} \int_{(i-1)\Delta s}^{i\Delta s} \pa_s^2\vp\,ds + \mathcal{O}(\Delta s) \,,
$$
and from
$$
	\frac{\vp_{i+1}-\vp_i}{\Delta s} = \frac{1}{\Delta s} \int_{(i-1)\Delta s}^{i\Delta s} \pa_s\vp \, ds +  \mathcal{O}(\Delta s)\,.
$$
Restricting to appropriate subsequences we have $\omega_{\Delta s}\to \omega$ in $L^p_{loc}\left([0,\infty)\times [0,1]\right)$ as
$\Delta s\to 0$, and we may pass to the limit in \eqref{om-weak-discr}:
\begin{align}
   \int_0^1 \omega_0(s)\vp(s,0)ds + \int_0^\infty \int_0^1 \omega \dt\vp\,ds\,dt  
   = \int_0^\infty \int_0^1 f(\omega)\pa_s^2\vp \,ds\,dt \,+ \int_0^\infty \int_0^1 v(x) \pa_s\vp \, ds \,,
\end{align}
where $\omega_0(s)$ is the limit of the initial data $\omega_{\Delta s}(0)$, which exists due to the assumption $\omega_{\Delta s}(0) \in BV(\R)$.
This is the weak formulation of the initial value problem for \eqref{eq:mac-om}.\\
Since $x_0$ and $x_N$ are bounded, we can pass to the limit in the weak formulations of 
$$
    f(\omega_1) = \Delta s (v(x_0) - \dot x_0) \,,\qquad f(\omega_N) = \Delta s (v(x_N) - \dot x_N)  \,,
$$
(see \eqref{eq:IBM}) to obtain the boundary conditions \eqref{BC:mac-om}.
Finally, \eqref{eq:xbar-x} is obtained by passing to the limit in \eqref{eq:xbar-x-approx}.
\end{proof}

\begin{remark}
For the case $v \equiv 0$ an existence theory for the continuous problem written in terms of $x(s,t)$ (see \eqref{eq:mac-x}) can also be carried out by interpreting 
the problem as gradient flow for the energy functional
$$
   E[x] := - \int_0^1 \int_0^{\ds x} f(p)dp\, ds \,.
$$
The basic theory (see e.g.  \cite{evans10}), however, only gives $x\in C([0,\infty); L^2((0,1)))$ and not much information on 
$\omega = \pa_s x$.
\end{remark}

\section{Numerical simulations}\label{sec:num}

\subsection{Microscopic model:}\label{subsec:numericasmicro} We illustrate the previous statements with numerical experiments in $x$ and $\omega$. We solve the systems \eqref{eq:IBM} and \eqref{eq:IBM_om} for the following two choices of nonlinearities
$$
	f_1(\cdot):=\left(1-\cdot\right)_+,\qquad f_2(\cdot):=\left(1-\cdot\right)_+^2
$$
with an implicit Euler algorithm to conserve the characteristic properties. We discretize as follows,
\begin{align}\label{eq:microdiscre}
\begin{aligned}
x_i^{n+1} =  & x_i^{n}+\frac{\Delta t}{\Delta s}\left[-f\left(\frac{x^{n+1}_{i+1}-x^{n+1}_i}{\Delta s}\right) + f\left(\frac{x^{n+1}_{i}-x^{n+1}_{i-1}}{\Delta s}\right)\right] \\ 
& +\Delta t \, v(x^{n+1}_{i}),\quad i \in \{1,\dots,N-1\},
\end{aligned}
\end{align}
including the boundary values $x_0$, $x_N$, based on classical ideas as in \cite{butcher2016numerical}. System \eqref{eq:IBM_om} can be calculated in every timestep from the results of \eqref{eq:microdiscre} to avoid coupling.

For $v\equiv0$ and $f=f_1$ we simulate $N=20$ agents and chose a time-stepping of $\Delta t=0.1\Delta s^2$, in which a typical parabolic CFL-condition in incorporated. The non-linearity in $f$ is solved with a fixed-point approach over $n=40$ iterations as proposed in \cite{kelley1995iterative}. The results can be found in Figure \ref{fig:micro1}.

\begin{figure}[htb]
 \centering
\begin{subfigure}[t]{0.45\textwidth}
\centering
\includegraphics[height=3.5cm]{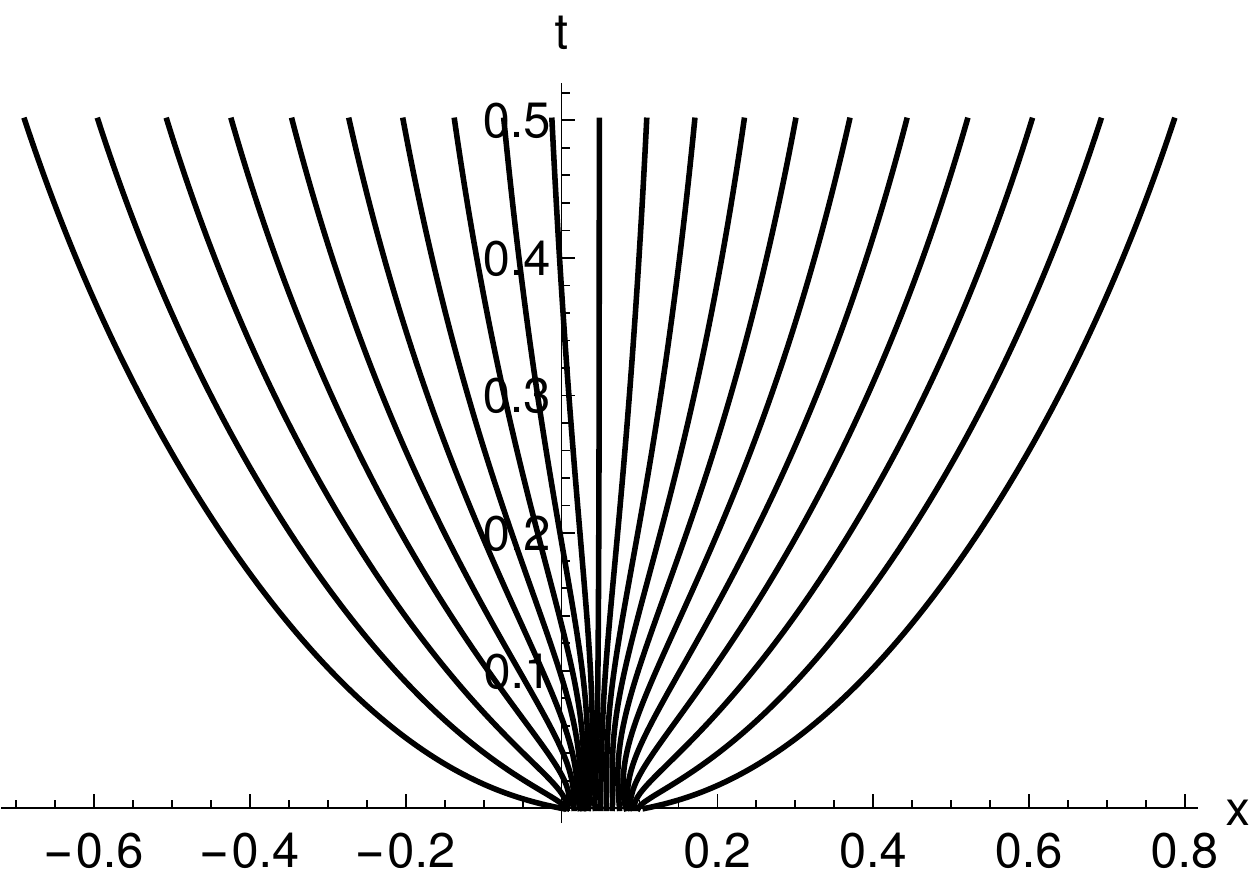}
 \caption{The trajectories $x_i$ over time where the initial values are chosen so that $x_i-x_{i+1}<\Delta s$ for all $i$. All agents are in interaction with their neighbours and the ensemble spreads.}
\label{fig:micro1a}
\end{subfigure}
\hspace{0.2cm}
\begin{subfigure}[t]{0.45\textwidth}
\centering
\includegraphics[height=3.5cm]{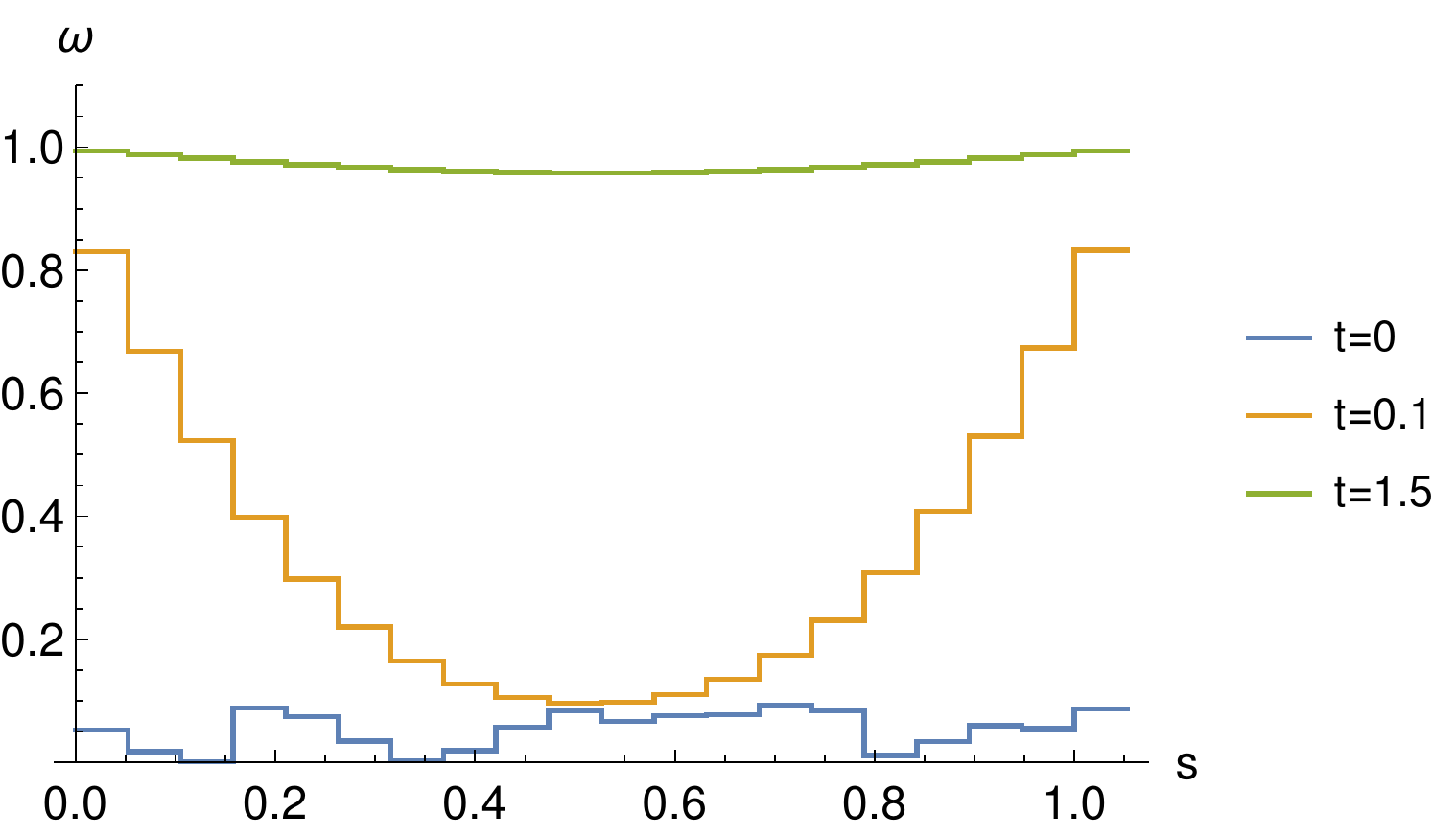}
\caption{The simulation associated with Figure \ref{fig:micro1a} in $\omega$, we see a smoothing effect with long term behavior $\omega_i \to 1$.} 
\label{fig:micro1b}
\end{subfigure}

\begin{subfigure}[t]{0.45\textwidth}
\centering
\includegraphics[height=3.5cm]{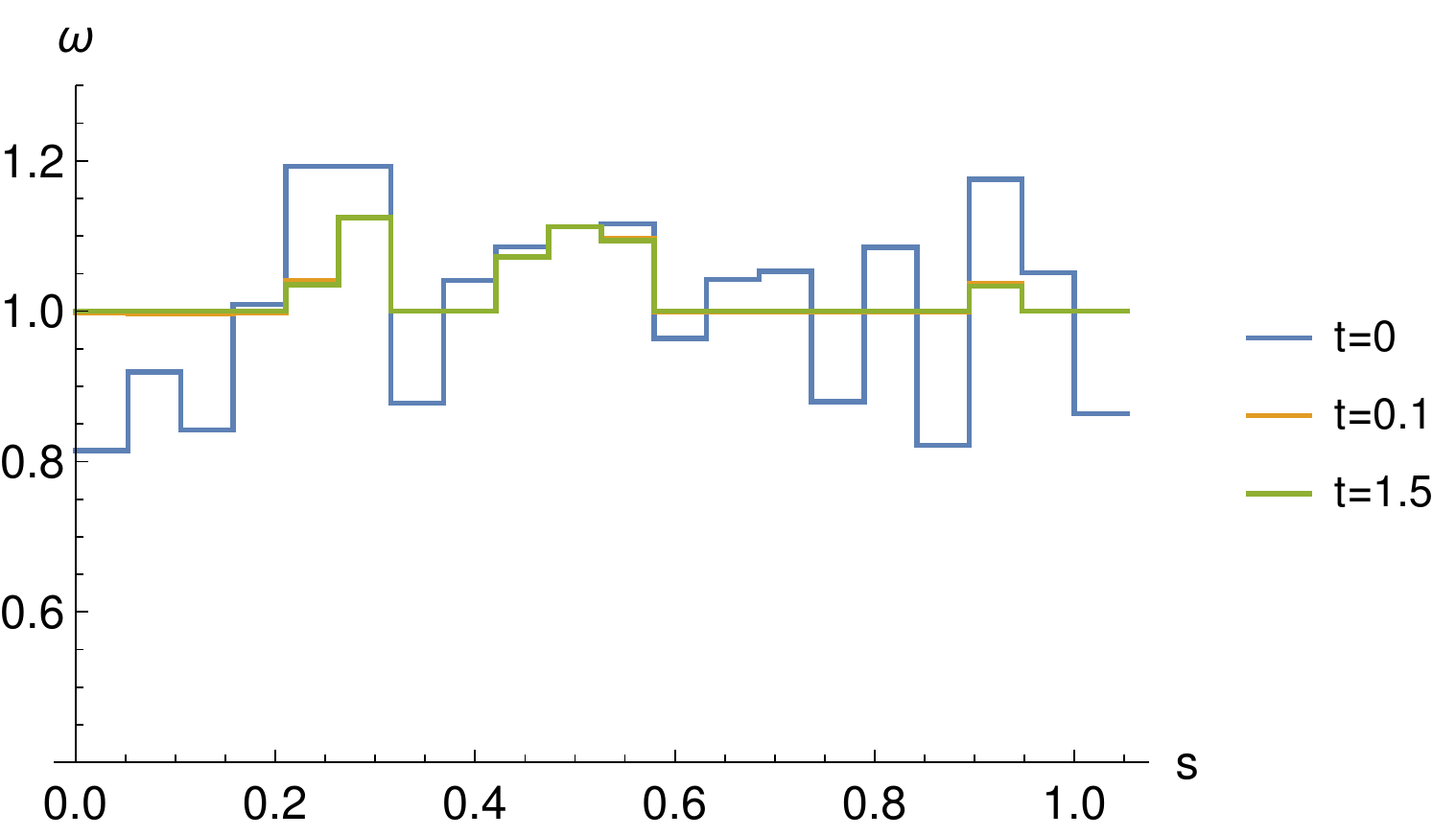}
\caption{Simulations in $\omega$. The smoothing effect stops before all $\omega_i$ reach the value 1. This behaviour is expected, since initially the threshold 1 is exceeded by a critical amount of $\omega_i$, hence some plateaus will not interact since the points in between already reached the value 1.} 
\label{fig:micro2b}
\end{subfigure}
\hspace{0.2cm}
\begin{subfigure}[t]{0.45\textwidth}
\centering
\includegraphics[height=3.5cm]{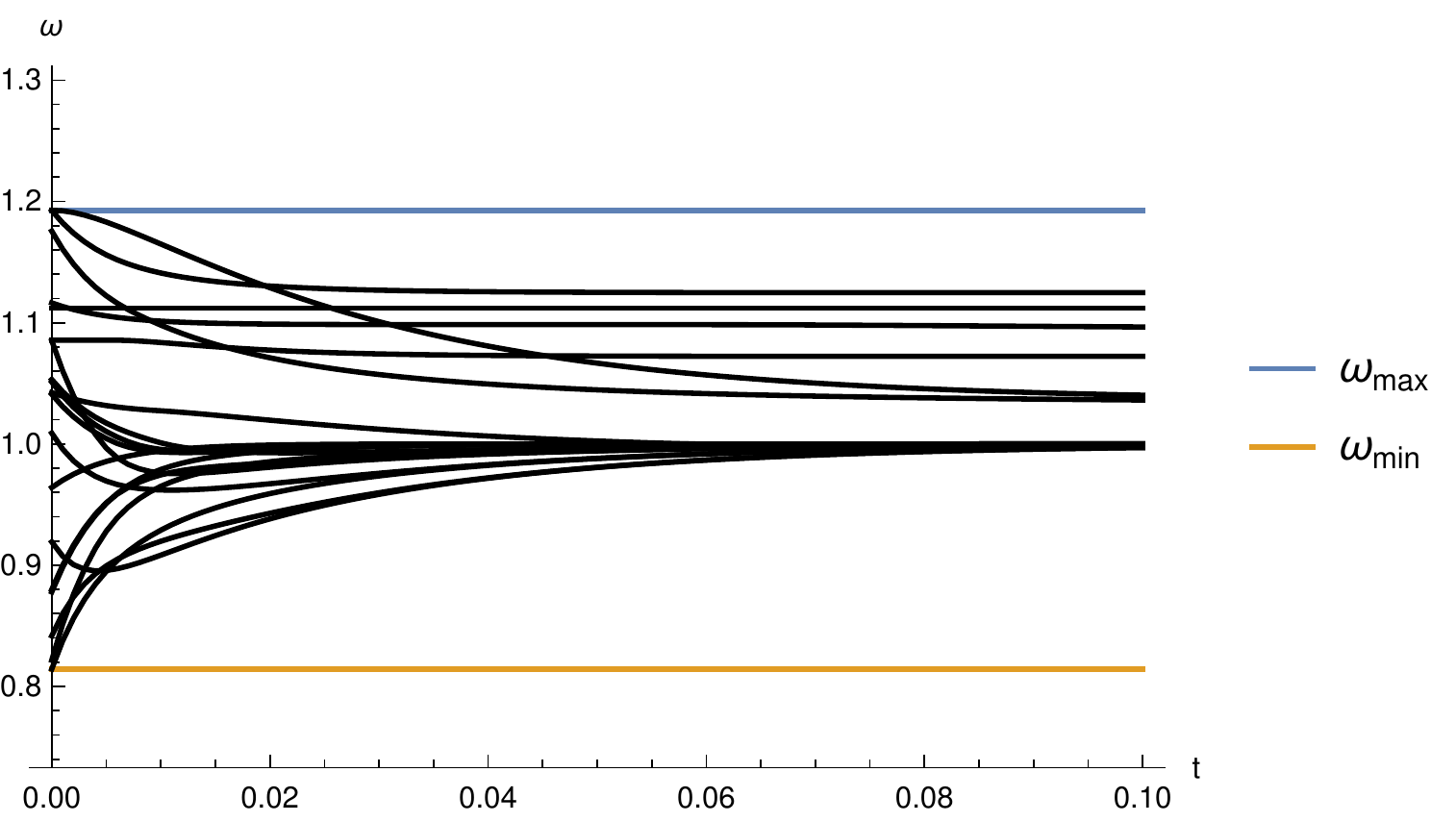}
\caption{The Min-Max-principle in Theorem \ref{theo:exuniq} visualized for the dynamics associated to Figure \ref{fig:micro2b}. Each black line corresponds to the time-evolution of a $\omega_i$.} 
\label{fig:micro3b}
\end{subfigure}
\caption{Time evolution of the discrete systems in $\omega$ and $x$ for $v\equiv 0$, $f=f_1$ and different initial values.}\label{fig:micro1}
\end{figure}

The Min-Max-principle in Theorem \ref{theo:exuniq}, visualized in Figure \ref{fig:micro3b}, already shows for $N=20$ the relation with a parabolic system which has in general a smoothing effect, as can be seen in Figure \ref{fig:micro1b}. However, this effect does not occur if the points $x_i$ are too far apart, which is visualized in Figure \ref{fig:micro2b}. Indeed, in that case groups of particles remain unmoved, since their metric distance is above the given interaction threshold. 

Figure \ref{fig:microdrift} corresponds to simulations for $f=f_1$ and non-trivial $v$, where the two simulated choices of $v$ can be seen in Figure \ref{fig:microD1a}. In Figure \ref{fig:microD1b} the agents are accelerating if their position is around $x=1$, caused by the enhanced value of $v_1$ . However, the distance (depending on $\Delta s$ and $N$) to the relative leading agent does not decrease due to the repulsion. For $v_{2}$ (orange in Figure \ref{fig:microD1a}) the agents congregate around position $x=1$, before the repulsion acts due to the resulting higher densities, which can be seen in Figure \ref{fig:microD1c}.

\begin{figure}[htb]
 \centering
\begin{subfigure}[t]{0.3\textwidth}
\centering
\includegraphics[height=2.75cm]{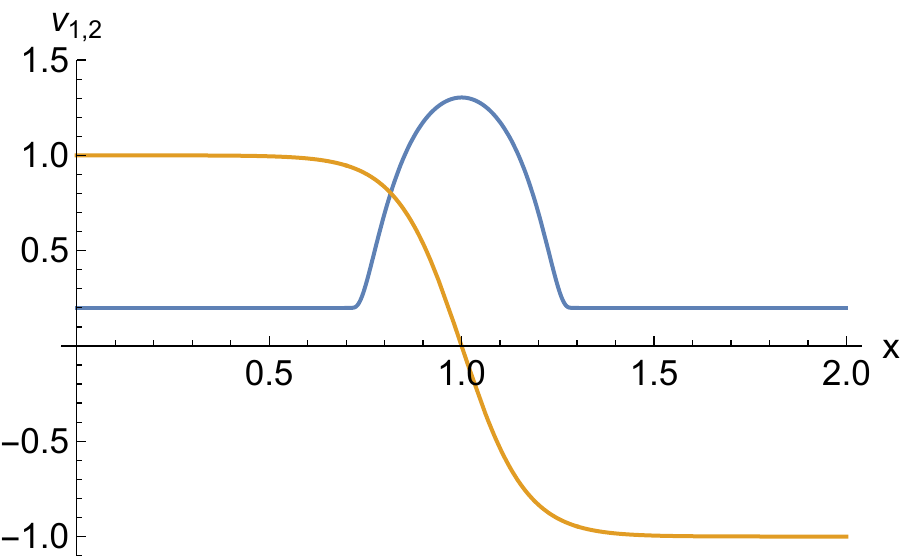}
 \caption{Velocities $v_{1}$ (blue) and $v_2$ (orange).}
\label{fig:microD1a}
\end{subfigure}
\hspace{0.2cm}
\begin{subfigure}[t]{0.3\textwidth}
\centering
\includegraphics[height=2.75cm]{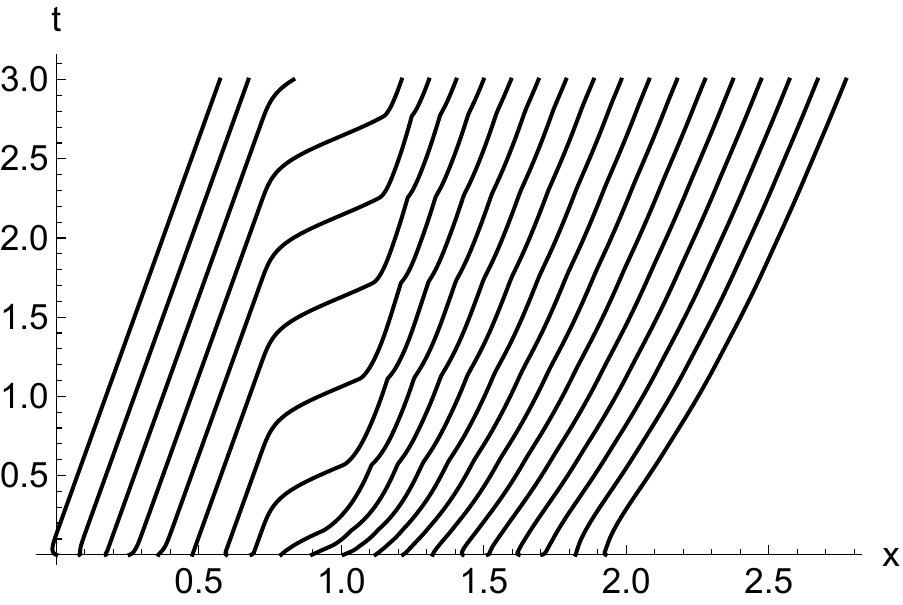}
\caption{The microscopic dynamics for $v_1$ (blue).} 
\label{fig:microD1b}
\end{subfigure}
\hspace{0.2cm}
\begin{subfigure}[t]{0.3\textwidth}
\centering
\includegraphics[height=2.75cm]{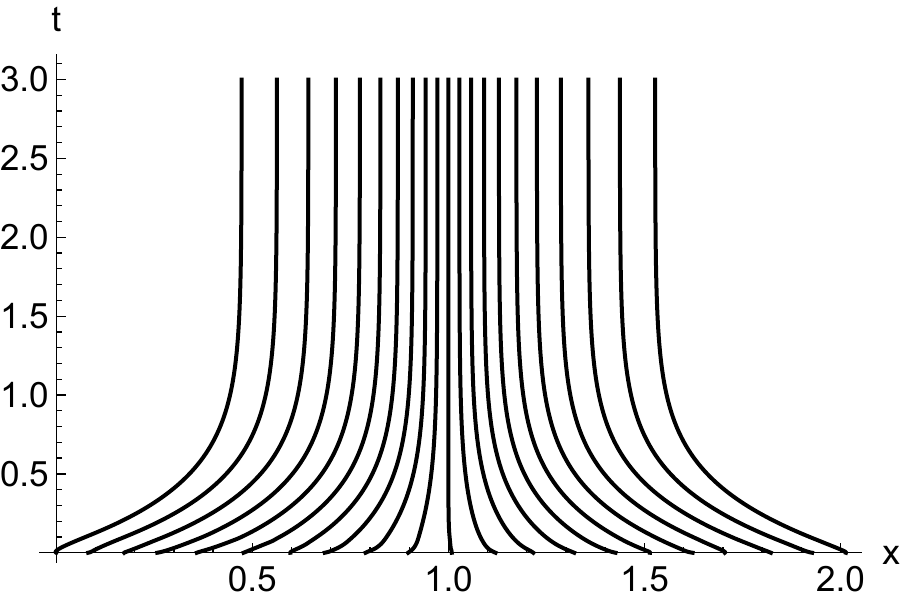}
\caption{The microscopic dynamics for $v_2$ (orange).} 
\label{fig:microD1c}
\end{subfigure}
\caption{Time evolution of the  discrete system in $x$ for $v\neq 0$.}\label{fig:microdrift}
\end{figure}

\subsection{Macroscopic model}\label{subsec:numericasmacro}
We investigate problem \eqref{eq:mac-rho} on an open domain $\mathbb{R}$ with $\rho_0>0$ and discretize it as $\rho (i\Delta x, j\Delta t)=\rho^j_i$ explicit in time via
\begin{align}\label{eq:macrodiscre}
\rho^{j+1}_i=\rho^{j}_i+\frac{\Delta t}{\Delta x^2} \left[ f\left(1/\rho^j_{i+1}\right) -2f\left(1/\rho^j_{i}\right) +f\left(1/\rho^j_{i-1}\right)\right] + \frac{\Delta t}{\Delta x} \left(\rho^j_{i+1}v^j_{i+1}-\rho^j_{i}v^j_{i}\right)
\end{align}
In the case $f=f_1$ we discretized the Laplacian by the usual finite difference approximation and used a first-order upwind scheme depending on the sign of $v$. 

We choose $\Delta x=0.001$ for a sharp visualisation of the shock and again $\frac{\Delta t}{\Delta x^2}=0.1$ in compliance with typical CFL-conditions, following classical literature, see~\cite{leveque1992numerical}. All initial-data chosen for the following simulations are typical sums of bump-functions of the form 
\begin{align}\label{eq:rho0macro}
	\rho_0(x):= \begin{cases}  h\exp\left({\frac{b^2}{(x-m)^2-b^2}}\right)\,, \quad &x \in [-b+m,\,b+m] \\ 0\,, \quad &x \notin (-b+m,\,b+m) \end{cases}
\end{align}
and constants. 

\begin{figure}[htb]
 \centering
 \begin{subfigure}[t]{0.45\textwidth}
 \centering
\includegraphics[height=4cm]{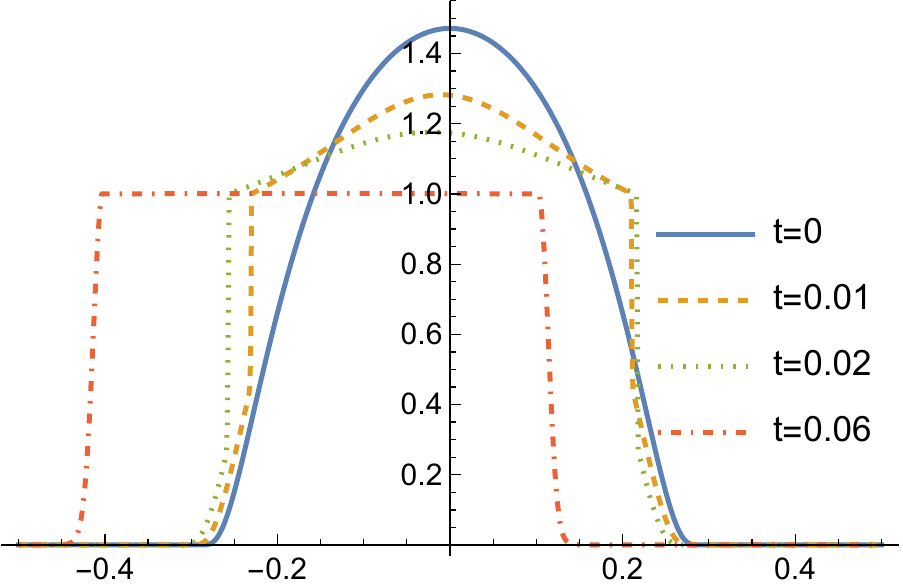}
\caption{Combination of drift with $v \equiv 1$ and diffusion with $f=f_1$ on the macroscopic scale.}
\label{fig:smoothingeffect}
\end{subfigure}
\hspace{0.2cm}
\centering
\begin{subfigure}[t]{0.45\textwidth}
\includegraphics[height=4cm]{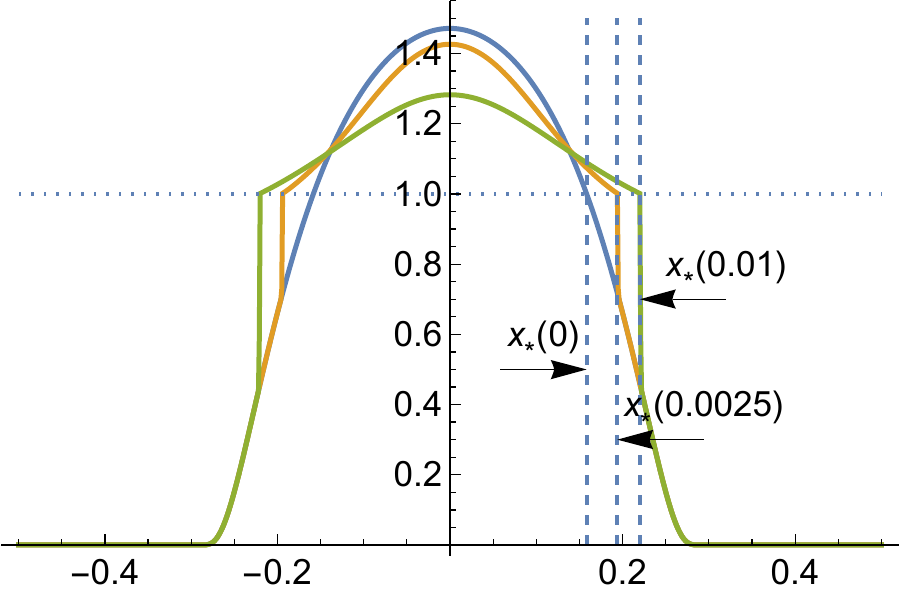}
 \caption{Smooth initial data and a shock evolving with time, which is created by $f=f_1$, for $v=0$. The position of the shock $x_\star$ at two different time-points was calculated from \eqref{eq:RH}.}
\label{fig:movingxs}
\end{subfigure}

\centering
 \begin{subfigure}[t]{0.45\textwidth}
\centering
\includegraphics[height=4cm]{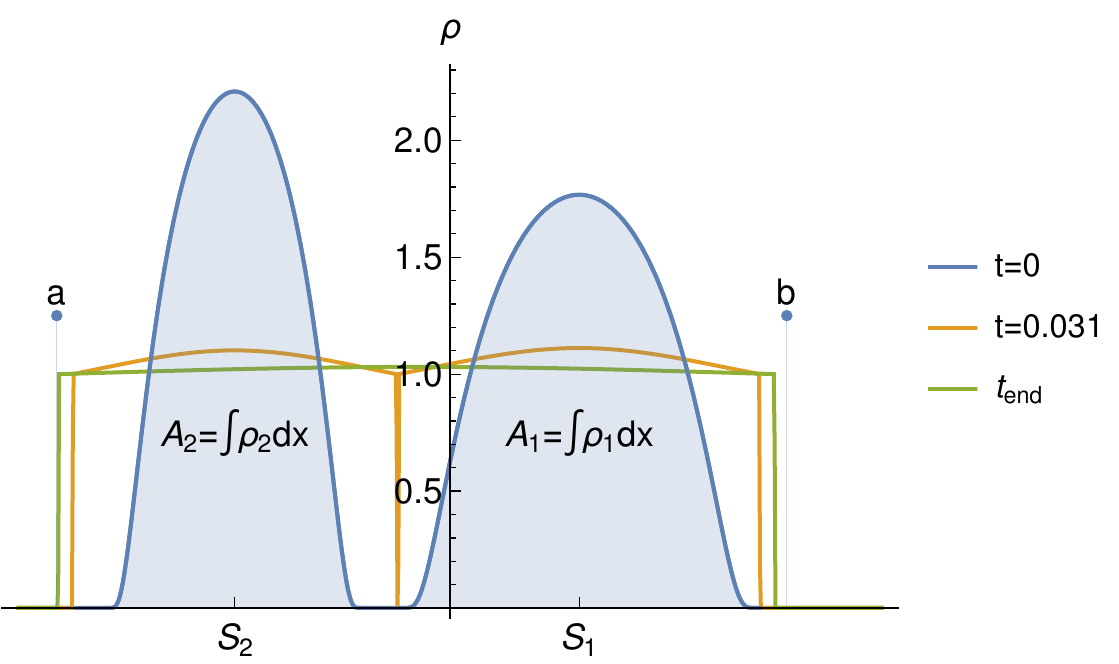}
 \caption{Two plateaus moving towards each other until they collide. The dynamics come to an end as soon the equilibrium size $[a,b]$ of the interval is reached.}
\label{fig:colliding}
\end{subfigure}
\hspace{0.2cm}
 \begin{subfigure}[t]{0.45\textwidth}
 \centering
\includegraphics[height=4cm]{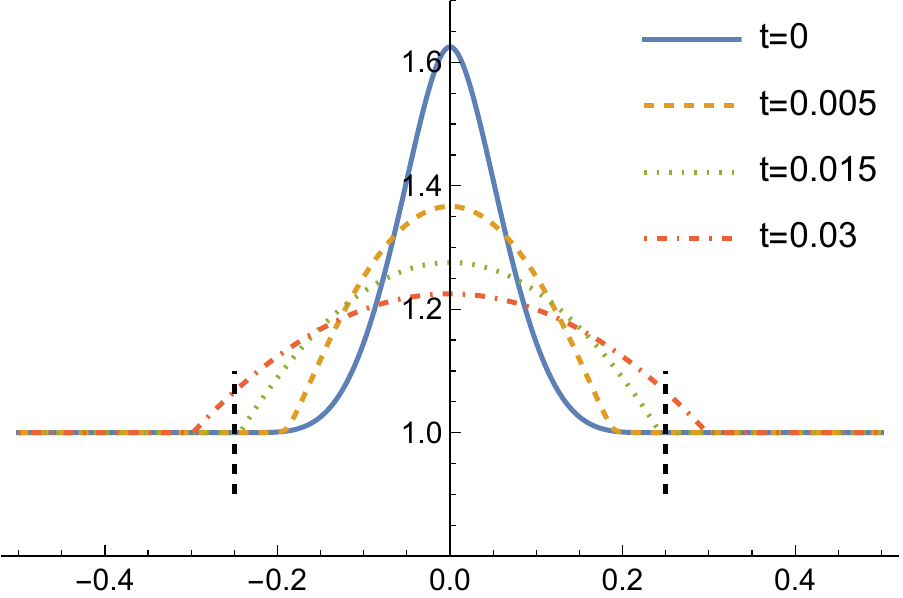}
\caption{Waiting-time phenomenon occurring for $f = f_2$ and initial data given by a fourth degree polynomial close to zero, which becomes flat outside. The points, where the initial data has the threshold-value 1 are marked with dashed lines.}
\label{fig:waiting}
\end{subfigure}
\caption{We visualize the combination of drift and diffusion as well as analytically described phenomena such as the shock velocity, the collision of two plateaus and its resulting steady state, and the waiting-time phenomenon.}\label{fig:rhomakro}
\end{figure}

In Figure \ref{fig:smoothingeffect} we see the combination of the diffusive effect and a drift to the left of system \eqref{eq:mac-rho} for the choices $f=f_1$ and $v\equiv1$. The diffusive effect acts stronger than the external forcing $v$.

Figure \ref{fig:movingxs} corresponds to simulations with $f=f_1$ and $v\equiv 0$ were the initial data were chosen such that they continuously pass through the threshold-value 1. The position of the jump $x_*$, which is formed for positive time, was calculated by \eqref{eq:RH}. We used for the discretization of \eqref{eq:RH} an explicit Euler algorithm for solving the ODE. Knowing $x_*$ is monotone  increasing and considering the left- and right-handed limits, we used in the numerator a downwind- and  for the denominator an upwind-approach.  We see that the discretisation \eqref{eq:macrodiscre} creates the correct shock-speed. Additionally continuous initial data lead to satisfying results since as mentioned, the discontinuity only occurs for $t=0$ and $x_*$ is Lipschitz for $t>0$. 

Figure \ref{fig:colliding} corresponds to the discussion regarding decay to equilibria for the case $v \equiv 0$ and $f=f_1$, which can be found at the end of Section \ref{s:macro}. We visualized a simulation for showing two colliding plateaus $\rho_{1,2}$ (with $\rho_0=\rho_1+\rho_2$), which are spreading in space, since each of their densities exceeds the threshold value 1, until they merge. The shocks move towards each other until the collision at time $t=0.031$ happens. The dynamics stop, once the final interval-length $[a,\,b]$ is reached, where the boundaries $a$ and $b$ are calculated by \eqref{cond:Iinfty1}-\eqref{cond:Iinfty2}.

In Figure \ref{fig:waiting} we visualized the in Section \ref{s:macro} mentioned waiting-time phenomenon, which occurs in porous medium equations. We chose $f=f_2$, no external forcing $v\equiv 0$ and initial data 
\begin{align*}
	\rho_0(x) := \begin{cases} -12800 | x| ^6+15360 | x| ^5-7200 | x| ^4+1600 | x| ^3-150 | x| ^2+\frac{13}{8} \,\, &\text{for }\, x \in  \left[-\frac{1}{4},\frac{1}{4}\right] \\ 1 \,\, &\text{else,} \end{cases}
\end{align*}
Therefore, $\partial_x f(1/\rho)$ initially vanishes at the boundary of the support of $\rho-1$, which is given by $x = \pm \frac{1}{4}$ and is marked with a dashed line.

\subsection{Comparison of the Microscopic and the Macroscopic Model}\label{subsec:numericasmicromacro}

We conclude the numerical simulations with an experiment, which compares the microscopic and the macroscopic dynamics in order to show consistency between the two scales. We start with a continuous initial density $\rho_0+0.5$ as in Equation \eqref{eq:rho0macro} (with $h=3$, $b=0.3$ and $m=0$).
We calculate the corresponding discrete values $x_i$ via \eqref{eq:coordinationtransform}. In Figure \ref{fig:vergleich} we see the solutions of the corresponding microscopic dynamics \eqref{eq:IBM_om}, solved numerically by \eqref{eq:macrodiscre}, and the one of the corresponding macroscopic dynamics \eqref{eq:mac-rho}, solved via \eqref{eq:microdiscre}, which are plotted beside each other. 
 
\begin{figure}[htb] \centering
 \begin{subfigure}[t]{0.45\textwidth}
 \centering
\includegraphics[height=3.5cm]{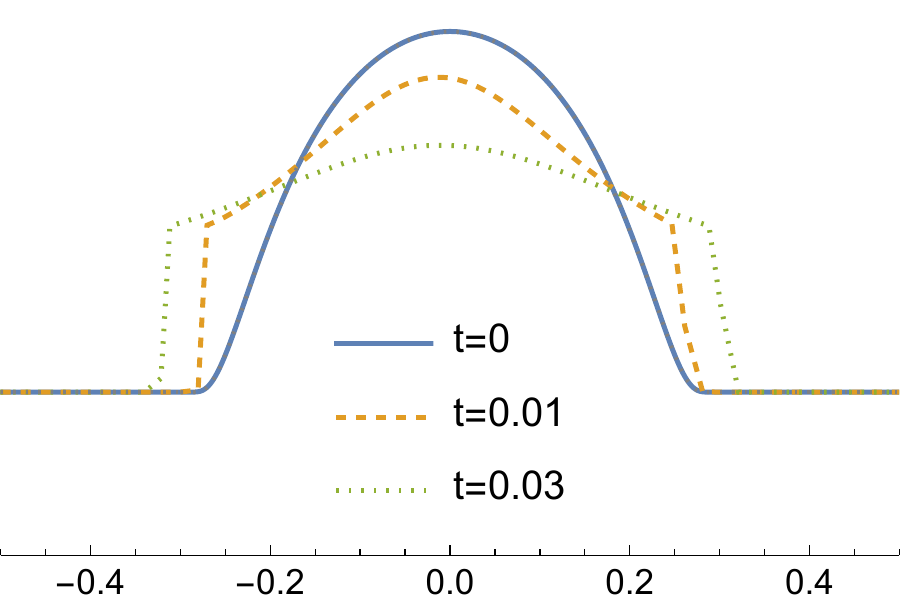}
\caption{Microscopic simulation for $N=40$ agents.} 
\end{subfigure}
\hspace{0.2cm}
\begin{subfigure}[t]{0.45\textwidth}
\centering
\includegraphics[height=3.5cm]{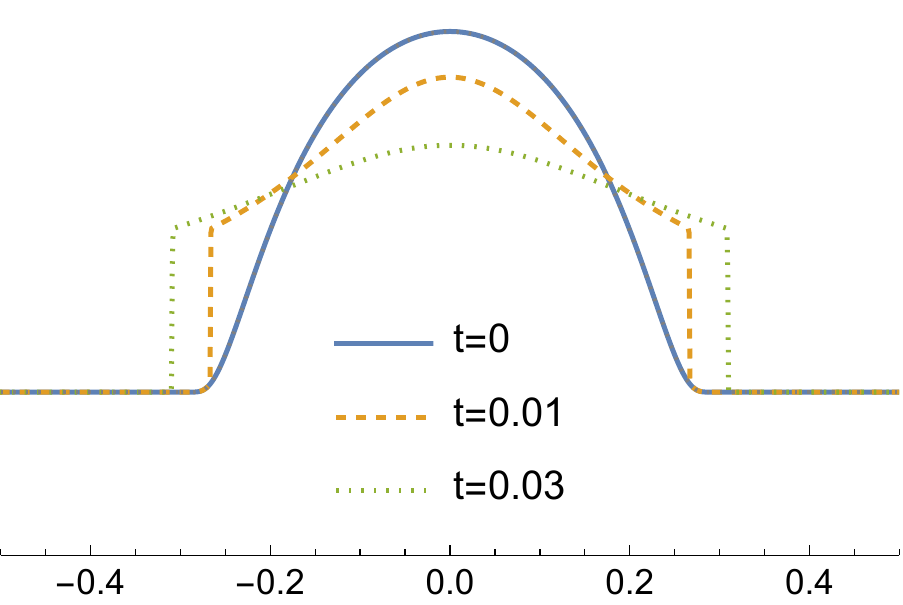}
\caption{Macroscopic simulation.}
\end{subfigure}
\caption{Comparison of the diffusive dynamics on both scales, we see a strong alignment already for $N=40$ agents.}
\label{fig:vergleich}
\end{figure}

\section{Conclusion}\label{sec:CaO}

\begin{figure}[htb]
 \centering
\includegraphics[height=3.5cm]{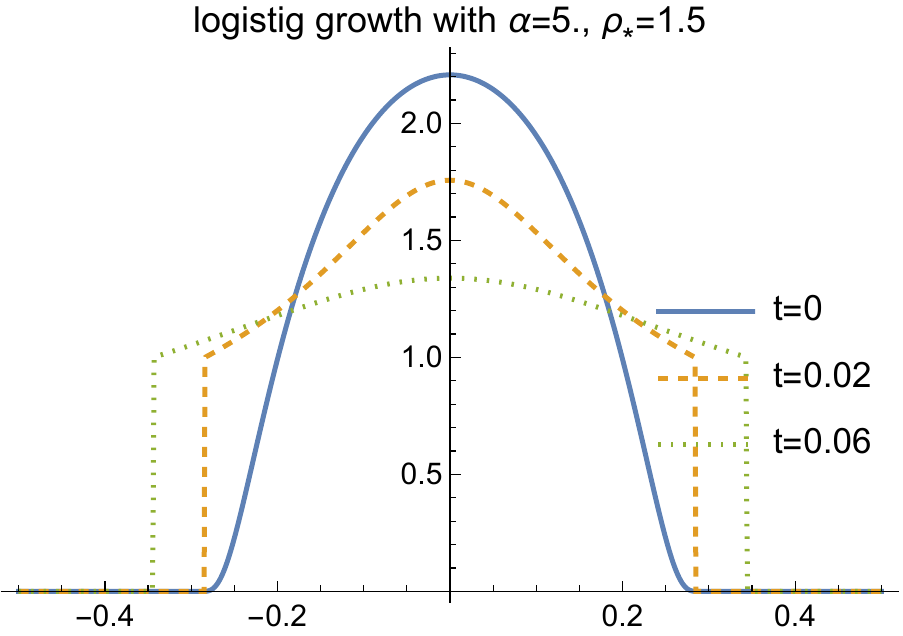}
 \caption{Simulation of equation \eqref{eq:growth}.}
\label{fig:log}
\end{figure}

In this work we modelled particles interacting in a certain radius and driven by an external force. We introduce the scaled distance $\omega$, which can also be seen as a derivative in Lagrangian coordinates $s$. From this interpretation, it was possible to define a microscopic density $\rho$ by inverting $\omega$.

For the microscopic systems \eqref{eq:IBM} and \eqref{eq:IBM_om} we could establish an existence and uniqueness result, together with a maximum-principle (Theorem \ref{theo:exuniq}). This result as well as observations of the particle's positions (Theorem \ref{theo:char}) are underlined with simulations in Section \ref{subsec:numericasmicro}. Properties of \eqref{eq:IBM_om} for $x$ and $\omega$ can be transferred to the density $\rho$.
 
On a macroscopic level our main focus was on \eqref{eq:mac-rho}, a conservation law for which we could derive jump conditions for discontinuous initial data, which can be related to the moving boundary of a Stefan problem. Moreover, we discussed specific choices for the repulsive force $f$, which lead to well known nonlinear diffusion equations as the porous medium equation or the fast diffusion equation. A rigorous limit from \eqref{eq:IBM_om} to \eqref{om-IVP} could be established in Theorem \ref{theo:limit}. Due to the non-linearity $f$, passing to the limit is non-trivial. In order to conclude by finding a weakly convergent sequence for system \eqref{eq:IBM_om}, we used bounds on the total variation with respect to the spatial variable of the solution and a compact interpolation theorem \cite{simon1986compact}. 

While this work has focused solely on diffusion in one spatial dimension, there are several natural extensions of the model that arise from practical considerations: for instance, source/sink terms, as well as aniosotropic effects that could arise in higher dimension. One motivating biological example that incorporate these effects is Bacteria growth:
\begin{align}\label{eq:growth}
		\pa_t \rho = \pa_x^2 f\left(\frac{1}{\rho}\right) + \alpha\rho(\rho_*-\rho),
\end{align}
	where $\alpha\rho(\rho_*-\rho)$ models the growth or death of bacteria depending on the carrying capacity $\rho_*>0$. In Figure \ref{fig:log} corresponding simulations for $f = f_1$, $\alpha = 0.5$ and $\rho_*=1.5$ can be found. 
	We leave the physical study of this and related models also in higher spatial dimensions as well as their rigorous derivation to future work.

\subsection*{Acknowledgments:} The authors are grateful to three anonymous referees, whose detailed comments led to a significant improvement of this article. Moreover, the authors thank \emph{Diane Peurichard} for fruitful discussions forming the starting point for these investigations.\\
 This work has been supported by the Austrian Science Fund, grants no. W1245 and F65. L.K. received funding by the European Commission under the Horizon2020 research and innovation programme, Marie Sklodowska-Curie grant agreement No 101034255.  \newline
\includegraphics[width=.1\linewidth]{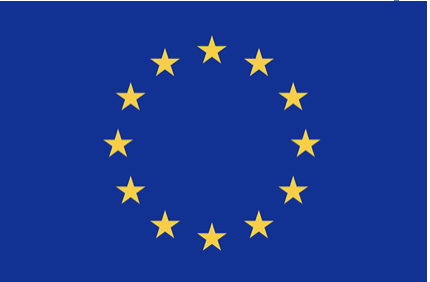}

\bibliographystyle{abbrv}
\bibliography{FKSlib.bib}

\end{document}